\tikzset{
  commutative diagrams/.cd, 
  arrow style=tikz, 
  diagrams={>=stealth}
}
\DeclareMathOperator{\Ext}{Ext}
\DeclareMathOperator{\Hom}{Hom}
\DeclareMathOperator{\End}{End}
\newcommand{\K}{\mathbf{k}}
\newcommand{\cA}{\mathcal{A}}
\newcommand{\cB}{\mathcal{B}}
\newcommand{\cC}{\mathcal{C}}
\newcommand{\cX}{\mathcal{X}}
\newcommand{\qA}{\mathbb{A}}
\newcommand{\tors}{\mathcal{T}}
\newcommand{\torsfree}{\mathcal{F}}
\newcommand{\torspair}{(\tors,\,\torsfree)}
\DeclareMathOperator{\Modf}{mod}
\newcommand{\tfunc}{\mathfrak{t}}
\newcommand{\ffunc}{\mathfrak{f}}
\newcommand{\preproj}{\mathcal{P}}
\newcommand{\preinj}{\mathcal{Q}}
\newcommand{\regular}{\mathcal{R}}
\DeclareMathOperator{\add}{add}
\newcommand{\homort}{{^{\perp_H}}}
\newcommand{\extort}{{^{\perp_E}}}
\DeclareMathOperator{\gen}{Gen}
\DeclareMathOperator{\sub}{Sub}
\newcommand{\univTorsfree}{\mathcal{E}}
\newcommand{\cotorsfunc}{\mathfrak{c}}
\newcommand{\tube}{\mathbb{T}}
\newcommand{\ray}[1]{\mathbf{R}_{#1}}
\newcommand{\coray}[1]{\mathbf{C}_{#1}}
\newcommand{\wing}[2]{\mathbf{W}_{#1}^{#2}}
\newcommand{\ind}[2]{M_{#1}^{#2}}
\newtheorem{thm}{Theorem}[section]
\newtheorem*{thm*}{Theorem}
\newtheorem{theoremIntro}{Theorem}
\newtheorem{corol}[thm]{Corollary}
\newtheorem{lemma}[thm]{Lemma}
\theoremstyle{definition}
\newtheorem{definition}[thm]{Definition}
\newenvironment{example}
{\pushQED{\qed}\examplex}
{\popQED\endexamplex}
\newtheorem{remark}[thm]{Remark}
\newcommand{\introheaderOne}[1]{\vspace{1em}{\bfseries \noindent #1 \ }}
\newcommand{\introheaderTwo}[1]{\par\vspace{1em} \noindent\emph{#1}\ }
\title[Universal Extensions and Orthogonal Complements]{Universal Extensions and Ext-Orthogonal Complements of Torsion Classes}
\author[E. S. Rundsveen]{Endre Sørmo Rundsveen}
\address{Department of mathematical sciences, NTNU, NO-7491 Trondheim, Norway}
\email{endre.s.rundsveen@ntnu.no}
\begin{document}
\begin{abstract}
    We show that torsion pairs in Krull--Schmidt abelian categories induce an equivalence between the subcategory of torsion-free objects admitting universal extensions to the torsion subcategory, and a quotient of the ext-orthogonal complement of the torsion subcategory.

    This generalize an equivalence described by Bauer--Botnan--Oppermann--Steen for tilting-torsion pairs and by Buan--Zhou for functorially finite torsion pairs. The result also provides a more direct proof of the functorially finite case, not relying on the machinery of two-term silting complexes. 
    
    We illustrate the equivalence in the special case of tube categories.

    After first appearing on the arXiv, the author learnt that the equivalence has previously been described by Demonet--Iyama.
\end{abstract}

\maketitle

\vspace{-2em}\tableofcontents

\section{Introduction}
\introheaderOne{Historical Remarks and Motivation.}
\emph{Torsion theories} have played a prominent role in algebraic studies from the early ventures into the world of groups and modules. After the general theory was first axiomatized for abelian categories by Dickson \cite{dickson1966torsion} the theory became an integral component in the study of categories as well. The underlying idea of the theory is to partition the objects of the category into subcategories which have a nicely behaved interplay, and from that gaining a deeper understanding of the category as a whole.

The rise of \emph{tilting theory} in the years after, showcased the usefulness of torsion theory. A \emph{tilting module} $T$ of an algebra $A$ was shown to induce a special torsion pair that could be carried over to a split torsion pair in the tilted algebra $B=\operatorname{End}(T)$ \cite{BB80,Bon81,HR82}. Thus, relating structure in the algebras to each other. The theory of tilting was however somewhat limiting when venturing beyond hereditary algebras, whence Adachi, Iyama and Reiten generalized it to support $\tau$-tilting in \cite{adachi_iyama_reiten_2014}. This generalization cemented the link to torsion theories by giving a one-to-one correspondance between support $\tau$-tilting modules and functorially finite torsion classes.

In recent years, the authors of \cite{bauer2020cotorsion} showed that tilting in abelian categories with enough projectives can be described through \emph{cotorsion--torsion triples} $(\mathcal{C},\mathcal{T},\mathcal{F})$, in an effort to reduce the complexity of \emph{persistence modules}. These triples were shown to induce an equivalence between the subcategory of torsion-free objects $\mathcal{F}$ and a certain quotient of $\mathcal{C}$. Related work for module categories of artin algebras by Buan and Zhou \cite{Buan2021tauTilting} extended this equivalence by introducing \emph{left weak cotorsion--torsion triples}. These triples are in correspondance with support $\tau$-tilting modules. Around the same time \cite{asadollahi2022tau} introduced \emph{$\tau$-cotorsion--torsion triples} for abelian categories with enough projectives, which in module categories of artin algebras coincide with left weak cotorsion--torsion triples.

The equivalence of \cite{bauer2020cotorsion} and most directly \cite{Buan2021tauTilting} can be seen as a result on functorially finite torsion classes with no mention of ($\tau$-)tilting. That is, let $\torspair$ be a functorially finite torsion pair in the module category of an artin algebra $A$, then there is an equivalence (\cite[Cor.~4.8]{Buan2021tauTilting} and \cite[Thm.~2.13]{bauer2020cotorsion})
\begin{equation}\label{eq:equivalenceToGeneralize}
    \frac{\extort\tors}{\extort\tors\cap\tors}\simeq \tors\homort=\torsfree
\end{equation}
where $\extort\tors=\{M\in \Modf(A)\,|\,\Ext_A^1(M,\tors)=0\}$. The motivating question for this article is whether we can eliminate the assumption of functorially finiteness while still obtaining an equivalence.

\introheaderOne{Our Results.}
The answer to our question turns out to be closely related to the existence of universal extensions. A universal extension is a form of subcategory approximation which were for example essential in Bongartz' work on completing partial tilting modules \cite{Bon81}, and used by Kerner to study wild hereditary algebras with three non-isomorphic simples \cite{otto1990universal}. Specifically, a universal extension from an object $Y$ to a subcategory $\cX$ is a short exact sequence
\[
\begin{tikzcd}
    X\rar[tail]&E\rar[two heads]&Y
\end{tikzcd}
\]
with $X$ in $\cX$, such that every other short exact sequence starting in $\cX$ and ending in $Y$ can be realised as a pushout of it.

Our main result is the following.
\begin{theoremIntro}[\Cref{Theorem: Main result}]\label{thm: Main Theorem}
    Let $\torspair$ be a torsion pair in a Krull--Schmidt abelian category $\cA$, and $\mathcal{E}$ the subcategory of objects in $\torsfree$ that admit an universal extension to $\tors$. Then there is an equivalence
    \begin{equation}\label{eq:generalizedEquivalence}
        \frac{\extort\tors}{[\tors]}\simeq \univTorsfree.
    \end{equation}
    where $[\tors]$ is the ideal of morphisms factoring through $\tors$.
\end{theoremIntro}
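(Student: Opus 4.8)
The plan is to construct two functors between $\extort\tors$ and $\univTorsfree$ and show they are mutually quasi-inverse after passing to the quotient by $[\tors]$. In one direction, I would send an object $C \in \extort\tors$ to its torsion-free part: applying the canonical sequence of the torsion pair, $\tfunc C \hookrightarrow C \twoheadrightarrow \ffunc C$, with $\tfunc C \in \tors$ and $\ffunc C \in \torsfree$. The key claim is that this sequence is in fact a universal extension from $\ffunc C$ to $\tors$, so that $\ffunc C$ lands in $\univTorsfree$; this should follow from $\Ext^1(C, \tors) = 0$ together with the long exact sequence obtained by applying $\Hom(-, T)$ (for $T \in \tors$) to the canonical sequence, which forces every extension of $\ffunc C$ by an object of $\tors$ to factor through $\tfunc C \hookrightarrow C$. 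Functoriality on morphisms is just the functoriality of $\ffunc$; one then checks this descends to the quotient, i.e. a morphism factoring through $\tors$ induces zero on torsion-free parts, which is immediate since $\Hom(\tors, \torsfree) = 0$.

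In the other direction, I would send an object $Y \in \univTorsfree$ to the middle term $E_Y$ of its (chosen) universal extension $X_Y \hookrightarrow E_Y \twoheadrightarrow Y$. The first thing to verify is that $E_Y \in \extort\tors$: applying $\Hom(-, T)$ to the universal extension and using that $X_Y \in \tors$, the universality is exactly the statement that $\Ext^1(E_Y, T) \to \Ext^1(X_Y, T)$ is... well, more precisely one shows $\Ext^1(E_Y, T) = 0$ because any extension of $E_Y$ by $T$ pulls back along $E_Y \twoheadrightarrow Y$-type arguments, or more cleanly: universality says the connecting map $\Hom(X_Y, T) \to \Ext^1(Y, T)$ is surjective, and combining with $\Ext^1(X_Y, T)$ sitting in $\tors$ being acted on appropriately yields the vanishing. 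I expect to need the characterization of universal extensions established earlier in the paper to make this rigorous. For morphisms $f \colon Y \to Y'$, I would lift $f$ to a morphism of the universal extensions (using the universal/pushout property to produce a map $E_Y \to E_{Y'}$ over $f$) and note that the lift is well-defined only up to morphisms factoring through $\tors$ — this is precisely why the target must be the quotient $\extort\tors/[\tors]$ rather than $\extort\tors$ itself.

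Finally I would check the two composites are naturally isomorphic to the identities. Starting from $Y \in \univTorsfree$, forming $E_Y$ and then taking its torsion-free part recovers $\ffunc E_Y \cong Y$, since $X_Y \in \tors$ is (by the earlier analysis) exactly $\tfunc E_Y$ — here I'd use that $X_Y$ has no torsion-free quotients and $Y$ has no torsion subobjects, so the universal extension sequence \emph{is} the canonical torsion sequence of $E_Y$. Starting from $C \in \extort\tors$, forming $\ffunc C$ and then its universal extension gives back the canonical sequence $\tfunc C \hookrightarrow C \twoheadrightarrow \ffunc C$, which by the first paragraph is a universal extension; since universal extensions are unique up to (non-unique) isomorphism of the middle term modulo maps through $\tors$, we get $E_{\ffunc C} \cong C$ in the quotient category. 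Naturality of all these isomorphisms is then a diagram chase.

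\medskip

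The main obstacle I anticipate is the well-definedness and functoriality of the functor $Y \mapsto E_Y$: universal extensions are only unique up to a non-canonical isomorphism (and the lifting of a morphism $Y \to Y'$ to the middle terms involves choices), so one must work in the quotient category from the start and carefully verify that all the ambiguity is concentrated in $[\tors]$. A secondary subtlety is making sure $\univTorsfree$ is actually closed under the relevant operations and that the chosen universal extensions can be organized coherently; the Krull--Schmidt hypothesis is presumably what rescues this, by giving enough rigidity to the indecomposable summands to pin down the constructions up to the allowed ideal. The $\Ext$-computations themselves (both vanishing statements) are routine once the universality is translated into the surjectivity of the appropriate connecting homomorphism.
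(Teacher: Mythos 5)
Your overall strategy is the same as the paper's: two functors, one sending $C\in\extort\tors$ to $\ffunc C$, the other sending $Y\in\univTorsfree$ to the middle term of a universal extension, with the ambiguity absorbed into the ideal $[\tors]$. The first direction is handled essentially as in the paper (the torsion sequence of $C$ is itself a universal extension of $\ffunc C$ to $\tors$, by the long exact sequence and $\Ext^1(C,\tors)=0$), and the descent to the quotient and the verification that the composites are the identities also track the paper's proof.

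However, there is a genuine gap in your argument that $E_Y\in\extort\tors$. The long exact sequence from $X_Y\rightarrowtail E_Y\twoheadrightarrow Y$ applied with $\Hom(-,T)$ reads
\[
\Hom(X_Y,T)\to\Ext^1(Y,T)\to\Ext^1(E_Y,T)\to\Ext^1(X_Y,T),
\]
and universality gives surjectivity of the first map, hence that $\Ext^1(Y,T)\to\Ext^1(E_Y,T)$ is zero. But this does \emph{not} force $\Ext^1(E_Y,T)=0$: the next term $\Ext^1(X_Y,T)$ is an extension group between two objects of $\tors$ and there is no reason for it to vanish (a torsion class is closed under extensions, not Ext-orthogonal to itself). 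Your phrase ``$\Ext^1(X_Y,T)$ sitting in $\tors$ being acted on appropriately yields the vanishing'' does not correspond to a working argument. In fact the claim as you've set it up is false: if $\xi\colon X\rightarrowtail E\twoheadrightarrow Y$ is a universal extension and $X'\in\tors$ has $\Ext^1(X',\tors)\neq 0$, then $X\oplus X'\rightarrowtail E\oplus X'\twoheadrightarrow Y$ is still a universal extension but its middle term is not in $\extort\tors$. The fix is to insist on \emph{minimal} universal extensions, which exist precisely because $\cA$ is Krull--Schmidt (this is where the hypothesis actually does its work, rather than in the bookkeeping you mention at the end), and then to run a Wakamatsu-type pullback argument: given an extension of $E$ by $T'\in\tors$, pull back along $X\rightarrowtail E$, use closure of $\tors$ under extensions, and invoke the right minimality of $E\twoheadrightarrow Y$ to split it. This is Lemma~\ref{Lemma:Wakamatsu-type} in the paper, and it cannot be replaced by the long-exact-sequence computation you propose.
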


\begin{remark}
    After the paper was first put on arXiv, Yu Zhou made me aware of Theorem~4.1 in \cite{DI16} which has \Cref{Theorem: Main result} as a special case. Thus, the result lacks the novelty it was first believed to possess. 
\end{remark}

When $\tors$ is functorially finite, every object of $\torsfree$ admits a universal extension. Moreover, in that case every morphism from $\extort\tors$ to $\tors$ factors through $\extort\tors\cap\tors$. We observe therefore that for functorially finite torsion pairs the equivalence in \eqref{eq:generalizedEquivalence} specializes back to that in \eqref{eq:equivalenceToGeneralize}, see  \Cref{Lem:func fin tors give old equivalence}. This provide an alternative proof of \cite[Cor.~4.8]{Buan2021tauTilting} which does not rely on the bigger machinery at play in that article.

\vspace{1em}At the end of the article we take a look at tame hereditary algebras in order to provide an example of non-functorially finite torsion classes. Utilizing a description of torsion classes of tubes given in \cite{baur2014torsion} we observe that for a torsion pair $\torspair$, all but finitely many indecomposable torsion-free objects admit a universal extension to $\tors$. In particular, when $\torspair$ is non-functorially finite we observe that within a tube $\tube$, either $\torsfree\cap\tube$ is of infinite type and every module admits a universal extension to $\tors$ (see \Cref{subsec: tors finite type}) or $\torsfree\cap\tube$ is of finite type and not all modules admit a universal sequence to $\tors$ (see \Cref{subsec: tors infinite type}). We will illustrate both cases with explicit examples, showcasing that within the tube the equivalence behaves almost as in the functorially finite case.

\introheaderOne{Conventions and notation}
Every subcategory in the article are assumed to be additive and closed under both summands and isomorphisms. In an abelian category $\mathcal{A}$ we denote by $\Ext_\mathcal{A}^i(X,Y)$ the Yoneda extension group of $i$-extensions from $X$ to $Y$, and identify them with the derived functors along the natural isomorphism whenever they are defined. Given a subcategory $\mathcal{X}$ of an abelian category $\mathcal{A}$ we denote by $\homort\mathcal{X}$ ($\extort\mathcal{X}$) the subcategory given by objects $A\in \mathcal{A}$ such that $\Hom_\mathcal{A}(A,\mathcal{X})=0$ ($\Ext_\mathcal{A}^1(A,\mathcal{X})=0$). The symmetrical notation $\mathcal{X}\homort$ and $\mathcal{X}\extort$ denote the dual cases. We say that a Krull--Schmidt category is of finite type if it has finitely many indecomposables up to isomorphism.

\subsection*{Acknowledgements} The author would like to express their sincere gratitude to Steffen Oppermann for the support and guidance provided during the project. In addition, Jacob Fjeld Grevstad is owed thanks for entertaining several discussions and for giving valuable feedback during the preparation of the manuscript.

The article is written during the authors employment as a PhD candidate at the Department of Mathematical Sciences of the Norwegian University of Science and Technology (NTNU).

Lastly, the author thanks Yu Zhou for making them aware of the paper \cite{DI16} and specifically Theorem~4.1 of that paper. 

\section{Preliminaries}

The question posed in this article is about properties of torsion subcategories in abelian categories. We therefore take the time to repeat the definition and basic properties of these. The notion of a torsion pair of an abelian category was introduced in \cite{dickson1966torsion} as a generalization of torsion in abelian groups, and can be formulated as follows in a general abelian category.

\begin{definition}
    Let $\mathcal{A}$ be an abelian category and $\torspair$ a tuple of full subcategories of $\mathcal{A}$. We call $\torspair$ a torsion pair in $\mathcal{A}$ if
    \begin{itemize}
        \item $\Hom_\Lambda(\tors,\,\torsfree)=0$, and
        \item for any object $X\in \mathcal{A}$ there exists a short exact sequence
        \[
        \begin{tikzcd}
            \tfunc X \arrow[tail]{r} &X\arrow[two heads]{r}&\ffunc X
        \end{tikzcd}
        \]
        such that $\tfunc X\in \tors$ and $\ffunc X\in \torsfree$.
    \end{itemize}
    The respective subcategories $\tors$ and $\torsfree$ of a torsion pair is called a torsion subcategory and a torsion-free subcategory. Whenever there is no ambiguity of which torsion pair we consider, an object $X$ will be called torsion if it lie in $\tors$ and torsion-free if it lie in $\torsfree$.
\end{definition}
\begin{remark}
The short exact sequence is unique for $X$ in the sense that if there is an exact sequence $\begin{tikzcd}[column sep=1.5em]
    T\arrow[tail]{r}{}&X\arrow[two heads]{r}{}&F
\end{tikzcd}$ with $T\in\tors$ and $F\in \torsfree$, then it is isomorphic to the one above. Hence we will be referring to this sequence as the \emph{torsion sequence} of $X$ with respect to $\torspair$. 
Consider a morphism $f\in\Hom_\cA(X,Y)$, then we can construct the following commutative diagram with exact rows
\[\begin{tikzcd}[column sep=2em,row sep=1.5em]
	{\tfunc X} & X & {\ffunc X} \\
	{\tfunc Y} & Y & {\ffunc Y}
	\arrow[tail, from=1-1, to=1-2]
	\arrow["{\tfunc f}", dashed, from=1-1, to=2-1]
	\arrow[two heads, from=1-2, to=1-3]
	\arrow["f", from=1-2, to=2-2]
	\arrow["{\ffunc f}", dashed, from=1-3, to=2-3]
	\arrow[tail, from=2-1, to=2-2]
	\arrow[two heads, from=2-2, to=2-3]
\end{tikzcd}\]
where the dashed morphisms are given uniquely through the universality of kernels and cokernels. Hence, the torsion pair $\torspair$ give rise to functors $\tfunc\colon \cA\to \tors$ and $\ffunc\colon \cA\to \torsfree$.
\end{remark}

In the study of subcategories of abelian categories the notion of (minimal) approximations introduced in \cite{auslander1980preprojective} has a fundamental role. A morphism $f\in \Hom_\cA(X,Y)$ is \emph{right minimal} if any endomorphism $g\in\End_\cA(X)$ such that $f\circ g=f$ is an automorphism, and equivalently it is \emph{left minimal} if any endomorphism $h\in \End_\cA(Y)$ such that $h\circ f=f$ is an autmorphism.

\begin{definition}
    Let $\cA$ be an abelian category and $\cX$ a subcategory of $\cA$. Then a morphism $f\in \Hom_\Lambda(X,A)$ with $X\in\cX$ and $A\in\cA$ is a \emph{right $\cX$-approximation} of $A$ if 
    $$
    \begin{tikzcd}
        \Hom_\cA(X',\,X)\arrow{r}{f\circ -}&\Hom_\cA(X',\,A)
    \end{tikzcd}
    $$
    is an epimorphism for all $X'\in \cX$. Equivalently, if any morphism $X'\to A$ with $X'\in \cX$ factors through $f$.
        \[
        \begin{tikzcd}
            &X'\arrow[dashed]{dl}{}\arrow{d}\\
            X\arrow{r}{f}&A
        \end{tikzcd}
        \]
        If $f$ in addition is right minimal, we call it a right minimal $\cX$-approximation of $A$. If all objects $A\in\cA$ admits a right $\cX$-approximation then $\cX$ is called \emph{contravariantly finite}. The notions of left approximations and covariantly finite subcategories are dually defined.
    A subcategory which is both covariantly and contravariantly finite is called \emph{functorially finite}.
\end{definition}

The torsion sequences for a torsion pair $\torspair$ provide approximations making $\tors$ contravariantly finite in $\cA$ and $\torsfree$ covariantly finite in $\cA$. In the module categories of a finite dimensional $\K$-algebra $\Lambda$ we have the following description of all functorially finite torsion classes:
\begin{thm}[{\cite[Theorem]{smalo1984torsion}}]
    Let $\torspair$ be a torsion pair in $\Modf(\Lambda)$ for a finite dimensional $\K$-algebra $\Lambda$. Then the following are equivalent.
    \begin{enumerate}
        \item $\tors$ is functorially finite,
        \item $\torsfree$ is functorially finite,
        \item $\tors=\gen(M)$ for some $M\in\Modf(\Lambda)$,
        \item $\torsfree=\sub(N)$ for some $N\in\Modf(\Lambda)$.
    \end{enumerate}
\end{thm}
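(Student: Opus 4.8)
The plan is to split the claim into a \emph{torsion} equivalence $(1)\Leftrightarrow(3)$, proved by constructing approximations by hand, a dual \emph{torsion-free} equivalence $(2)\Leftrightarrow(4)$ obtained from it via the standard $\K$-duality, and a bridge connecting the two halves, which I expect to be the genuinely hard point.

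\emph{Reductions.} For any torsion pair the torsion sequence $\tfunc X\hookrightarrow X\to\ffunc X$ is simultaneously a right $\tors$-approximation and a left $\torsfree$-approximation of $X$, so $\tors$ is always contravariantly finite and $\torsfree$ always covariantly finite; hence (1) amounts to covariant finiteness of $\tors$ and (2) to contravariant finiteness of $\torsfree$. Moreover the duality $D=\Hom_\K(-,\K)\colon\Modf(\Lambda)\to\Modf(\Lambda^{\mathrm{op}})$ carries $\torspair$ to a torsion pair $(D\torsfree,D\tors)$ in $\Modf(\Lambda^{\mathrm{op}})$, exchanging covariant with contravariant finiteness and $\gen(M)$ with $\sub(DM)$; thus conditions (2) and (4) for $\torspair$ become conditions (1) and (3) for $(D\torsfree,D\tors)$. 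So it suffices to prove $(1)\Leftrightarrow(3)$ for an arbitrary torsion pair in an arbitrary module category, together with one implication linking the two groups.

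\emph{The equivalence $(1)\Leftrightarrow(3)$.} For $(1)\Rightarrow(3)$, choose a left $\tors$-approximation $P_i\to M_i$ for each of the finitely many indecomposable projectives and put $M=\bigoplus_i M_i\in\tors$; then $\gen(M)\subseteq\tors$ as $\tors$ is closed under direct sums and quotients, while any $T\in\tors$ receives an epimorphism from a finite sum of copies of the $P_i$ whose components factor through the $P_i\to M_i$, forcing $T\in\gen(M)$. For $(3)\Rightarrow(1)$, write $\tors=\gen(M)$. If $P$ is projective, its evaluation map $P\to M^{\dim_\K\Hom_\Lambda(P,M)}$ is a left $\tors$-approximation: any $P\to T$ with $T\in\gen(M)$ lifts, by projectivity, through a fixed epimorphism $M^m\to T$, and the lift factors through the evaluation map, which is a left $\add(M)$-approximation. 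For general $X$, take a projective presentation $P_1\xrightarrow{d}P_0\to X\to 0$, let $P_0\to T_0$ be a left $\tors$-approximation, let $P_1\to\bar T_1$ be the surjection onto the image of a left $\tors$-approximation of $P_1$ (through which every morphism from $P_1$ into $\tors$ still factors), factor $P_1\xrightarrow{d}P_0\to T_0$ through $P_1\to\bar T_1$ as $d'\colon\bar T_1\to T_0$, and set $T=\cokernel(d')\in\tors$. The composite $P_0\to T_0\to T$ kills $d$, hence factors through a morphism $X\to T$, and one checks this to be a left $\tors$-approximation; the only delicate point — that a test morphism $X\to T'$ descends across $\cokernel(d')$ — is exactly where the surjectivity of $P_1\to\bar T_1$ is used. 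So $\tors$ is functorially finite, and applying $D$ yields $(2)\Leftrightarrow(4)$.

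\emph{The bridge, and the main obstacle.} What remains is a single implication across the divide, e.g.\ that $\tors$ covariantly finite implies $\torsfree$ contravariantly finite; since $D$ merely permutes the four conditions among themselves, this cannot be obtained by formal duality. By arguments dual to $(1)\Rightarrow(3)$ it is enough to produce right $\torsfree$-approximations of the finitely many indecomposable injectives (the general case then following by the construction dual to the one above), and I would attack this after first replacing the generator of $(1)\Rightarrow(3)$ by an Ext-projective one — taking $M$ to be the sum of the \emph{minimal} left $\tors$-approximations of the indecomposable projectives and checking $\Ext^1_\Lambda(M,\tors)=0$ via a Wakamatsu-type argument — and then exploiting the resulting short exact sequences together with the injective cogenerator $D\Lambda$. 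I expect this last passage, which must genuinely use finite length and the Ext-projectivity of $M$ rather than formal bookkeeping, to be the main difficulty of the proof.
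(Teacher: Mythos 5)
The paper does not prove this statement; it is quoted directly from Smal\o{} \cite{smalo1984torsion} and used as a black box, so there is no in-paper proof to compare against. I will therefore assess your proposal on its own merits.

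The first two-thirds of your argument are sound. The reductions are correct: the torsion sequence always makes $\tors$ contravariantly finite and $\torsfree$ covariantly finite, and the duality $D=\Hom_\K(-,\K)$ exchanges $\torspair$ with $(D\torsfree,D\tors)$, covariant with contravariant finiteness, and $\gen$ with $\sub$, so $(2)\Leftrightarrow(4)$ does follow formally from $(1)\Leftrightarrow(3)$. Your proof of $(1)\Rightarrow(3)$ via left $\tors$-approximations of the indecomposable projectives is correct, and so is $(3)\Rightarrow(1)$: the one subtle point, that one must pass from a left $\tors$-approximation $P_1\to T_1$ to its \emph{image} $\bar T_1$ so that the induced map $P_1\to\bar T_1$ is epi (and hence $\bar T_1\to T_0\to T'$ vanishes once $P_1\to\bar T_1\to T_0\to T'$ does), is exactly right and you flag it yourself.

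The genuine gap is the bridge. You correctly identify that after establishing $(1)\Leftrightarrow(3)$ and $(2)\Leftrightarrow(4)$ a further implication across the two groups is indispensable and cannot come from duality, but what you offer for it is a plan, not a proof. The difficulty is real and should not be underestimated: the naive attempt --- embed $F\in\torsfree$ into an injective $I$ and push to $\ffunc I$ --- breaks down because $F\cap\tfunc I$ is a submodule of a torsion module, and torsion classes are not closed under submodules, so it need not vanish. (For the Kronecker algebra with $\tors$ all non-preprojectives one has $\ffunc(D\Lambda)=0$, so this approach produces $\sub(0)=0$ rather than $\torsfree$.) Your replacement idea --- take $M$ Ext-projective in $\tors$ via a Bongartz-type completion and then argue with $D\Lambda$ and Wakamatsu --- points in a plausible direction, but you give no argument that the resulting short exact sequences produce a finitely generated cogenerator $N$ of $\torsfree$ or right $\torsfree$-approximations of the injectives, and it is precisely this step that uses the non-formal input (finite length, Ext-projectivity, and ultimately an Auslander--Reiten--theoretic comparison in Smal\o{}'s original proof). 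As it stands, the proposal establishes two of the three needed equivalences but leaves the theorem unproved.
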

In addition, we may note that any subcategory $\cX\subseteq \Modf(\Lambda)$ of finite type is functorially finite \cite[Prop.~4.2]{auslander1980preprojective}. The result above is extended in \cite{adachi_iyama_reiten_2014} to an equivalence between the set of functorially finite torsion pairs and the set of support $\tau$-tilting modules of $\Lambda$.

Another concept of interest for subcategories is that of universal extensions. In the derived category it may in fact be formulated as an approximation of shifted objects.
\begin{definition}
    Let $\cA$ be an abelian category and $\cX$ a class of objects in $\cA$. A short exact sequence 
    $\begin{tikzcd}[column sep=1.5em]
        X\arrow[tail]{r}&E\arrow[two heads]{r}&Y
    \end{tikzcd}$
    with $X\in\add\cX$ is a \emph{universal extension} from $Y$ to $\cX$ if it gives an epimorphism of functors $\eta_X\colon \begin{tikzcd}[ column sep=1.5em]\Hom_{\cA}(X,-)|_{\cX}\arrow[two heads]{r}& \Ext_{\cA}^1(Y,-)|_{\cX}\end{tikzcd}$.

    Should the morphism $\begin{tikzcd}[column sep=1.5em]
        E\arrow[two heads]{r}&Y
    \end{tikzcd}$ above be right minimal, we call the universal extension \emph{minimal}.
\end{definition}

\begin{remark}\label{remark:universal extension as pushout}
    We may also define a universal extension to $\cX$ through a push-out property. That is, a short exact sequence
    $
    \begin{tikzcd}[column sep=1.5em]
        X\arrow[tail]{r}&E\arrow[two heads]{r}&Y
    \end{tikzcd}
    $
    with $X\in \cX$ is a universal extension of $Y$ to $\cX$ if and only if for every other short exact sequence 
    $
    \begin{tikzcd}[column sep=1.5em]
        X'\arrow[tail]{r}&E'\arrow[two heads]{r}&Y
    \end{tikzcd}
    $
    with $X'\in \cX$, we can find morphisms such that the left hand square below is a pushout-square
    \[
    \begin{tikzcd}
        X\rar[tail]\arrow[dashed]{d}&E\rar[two heads]\arrow[dashed]{d}&Y\arrow[equal]{d}\\
        X'\rar[tail]&E'\rar[two heads]&Y
    \end{tikzcd}
    \]
\end{remark}

A universal extension, if it exists, need not be unique. However, a minimal universal extension is unique up to non-unique isomorphisms. Hence, it may be beneficial to work over an abelian category where arbitrary universal extensions can be made minimal. It turns out that the Krull--Schmidt property suffices. 
\begin{lemma}[{\cite[Cor.~1.4]{krause1997minimal}}]\label{Lemma:Decompose morphism in minimal}
    Let $\cA$ be an abelian Krull--Schmidt category. For any morphism $f\colon X\to Y$ in $\cA$ there exists a decomposition 
        \[
        f=\begin{pmatrix}
            f'&0
        \end{pmatrix}\colon 
        \begin{tikzcd}X=X'\oplus X''\rar& Y
        \end{tikzcd}
        \]
        such that $f'$ is right minimal.
\end{lemma}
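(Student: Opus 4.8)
The plan is to produce the decomposition by a minimality argument that reduces the whole statement to a single claim, which I will phrase as: \emph{a morphism that is not right minimal must vanish on some nonzero direct summand of its source.} Granting this claim, the decomposition is extracted by choosing a maximal such summand; the claim itself is then proved by working inside the (semiperfect) endomorphism ring and performing a Peirce‑decomposition normalization.

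Here is the reduction. Since $\cA$ is Krull--Schmidt, every object $Z$ has a well‑defined number $\ell(Z)$ of indecomposable summands (with multiplicity), and for $Z$ a summand of $X$ this takes values in the finite set $\{0,1,\dots,\ell(X)\}$. Among all decompositions $X=X'\oplus X''$ with $f|_{X''}=0$ — there is at least the trivial one, $X''=0$ — I choose one with $\ell(X')$ minimal, and claim that the restriction $f':=f|_{X'}$ is right minimal, so that $f=\begin{pmatrix} f' & 0\end{pmatrix}$ is the asserted form. Indeed, were $f'$ not right minimal, the claim above (applied to $f'$) would yield a nonzero direct summand $X_0'$ of $X'$ with $f'|_{X_0'}=0$; writing $X'=X_1'\oplus X_0'$ we would get $X=X_1'\oplus(X_0'\oplus X'')$ with $f$ vanishing on $X_0'\oplus X''$ and $\ell(X_1')<\ell(X')$, contradicting minimality.

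It remains to prove the claim. Set $R=\End_\cA(X)$; because $\cA$ is Krull--Schmidt, $R$ is semiperfect. If $f$ is not right minimal there is $g\in R$, not an automorphism, with $f\circ g=f$, so $h:=\mathrm{id}_X-g$ lies in $\mathfrak a:=\{\,a\in R:f\circ a=0\,\}$ but not in $\operatorname{rad} R$ (otherwise $g=\mathrm{id}_X-h$ would be invertible); note $\mathfrak a$ is a right ideal, since $f\circ(a\circ s)=0$ whenever $f\circ a=0$. Hence the image of $\mathfrak a$ in the semisimple ring $R/\operatorname{rad} R$ is a nonzero right ideal, so it contains a primitive idempotent, and lifting it (idempotents, and primitive ones, lift modulo the radical of a semiperfect ring) gives a primitive idempotent $e\in R$ whose residue class lies in the image of $\mathfrak a$. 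Pick $a\in\mathfrak a$ with $\bar a=\bar e$; replacing $a$ by $a\circ e$ (still in $\mathfrak a$, still of residue class $\bar e$ since $\bar e\circ\bar e=\bar e$) we may assume $a$ vanishes on $(1-e)X$, so in the Peirce decomposition $X=eX\oplus(1-e)X$ the endomorphism $a$ has matrix $\left(\begin{smallmatrix} p & 0\\ q & 0\end{smallmatrix}\right)$ with $p\in\End_\cA(eX)=eRe$ reducing to the identity modulo $\operatorname{rad}(eRe)$, hence invertible because $eRe$ is local. Then $b:=a\circ\bigl(p^{-1}\oplus\mathrm{id}_{(1-e)X}\bigr)$ still satisfies $f\circ b=0$, yet its matrix is $\left(\begin{smallmatrix} \mathrm{id}_{eX} & 0\\ qp^{-1} & 0\end{smallmatrix}\right)$, which is a nonzero idempotent; so $bX\neq0$ is a direct summand of $X$ killed by $f$, as required.

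The main obstacle is precisely the gap between "$f$ is not right minimal" (which only supplies a non‑invertible endomorphism fixing $f$) and the genuine \emph{idempotent} summand we need. The key idea for bridging it is that the relevant data live in the \emph{right} ideal $\mathfrak a$, so one is free to normalize by composition on the source side without leaving $\mathfrak a$; combined with the locality of the corner ring $eRe$, the Peirce‑matrix correction above turns an approximate idempotent into an exact one. (Equivalently, the claim is the statement that the cyclic right $R$‑module $f\circ R\cong R/\mathfrak a$ admits a projective cover over the semiperfect ring $R$ — the functor‑category formulation of minimal right approximations.) Everything else — the bookkeeping in the minimality step and the cited facts about semiperfect rings — is routine.
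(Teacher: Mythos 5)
The paper does not reprove this lemma; it cites \cite[Cor.~1.4]{krause1997minimal}. Your argument is a correct, self-contained proof, and it is essentially the standard one underlying that reference: reduce, by stripping off summands in the kernel and using that $\ell(X')$ is bounded, to the claim that a non-right-minimal morphism must kill a nonzero summand of its source; then prove the claim via the semiperfect structure of $\End_\cA(X)$. All the technical steps check out: $\mathfrak a$ is indeed a right ideal, $h=\mathrm{id}_X-g$ lies in $\mathfrak a\setminus\operatorname{rad}R$, a nonzero right ideal of the semisimple quotient contains a primitive idempotent, the lift $e$ is primitive because $eRe/e(\operatorname{rad}R)e$ is a division ring, $p=eae$ is congruent to $e$ modulo $\operatorname{rad}(eRe)$ and hence invertible in the local ring $eRe$, and the right-multiplication by $p^{-1}\oplus\mathrm{id}$ stays inside the right ideal $\mathfrak a$ and produces the exact nonzero idempotent $b$. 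The remark identifying the claim with the existence of a projective cover of $R/\mathfrak a$ over the semiperfect ring $R$ is also accurate and is precisely the functor-category viewpoint of Krause's proof, so your route is in substance the same as the cited one rather than a genuinely different alternative.
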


Observe that this result characterizes minimal universal extensions to $\cX$ as universal extensions without summands on the form $
    \begin{tikzcd}[column sep=1.5em]
        X\arrow[tail]{r}&X\arrow[two heads]{r}&0
    \end{tikzcd}
    $ with $X\in \cX$. 
The connection of universal extensions and approximations can be made more explicit, as in the following lemma.

\begin{lemma}\label{Lemma: Covariantly finite gives universal}
    Let $\cA$ be an abelian category with enough projectives, and $\cX$ be a covariantly finite subcategory of $\cA$. Then every object $A$ in $\cA$ has a universal extension to $\cX$.
    \begin{proof}
    $\cA$ has enough projectives, so we construct from an epimorphism $P\to A$ of a projective object $P$ of $\cA$ to $A$, the short exact sequence
    \[
    \begin{tikzcd}[]
        \Omega A\arrow[tail]{r}&P\arrow[two heads]{r}&A
    \end{tikzcd}.
    \]
    This gives us the epimorphism $\begin{tikzcd}[column sep=1.5em]
        \Hom_\cA(\Omega A,-)\rar[two heads]&\Ext^1_\cA(A,-).
    \end{tikzcd}$ $\cX$ is covariantly finite, so we can find an epimorphism $\begin{tikzcd}[column sep=1.5em]\Hom_\cA(X,\,-)|_\cX\rar[two heads]&\Hom_\cA(\Omega A,\,-)|_\cX \end{tikzcd}$ with $X\in \cX$. By composition we now obtain an epimorphism 
    \[
    \begin{tikzcd}
        \Hom_\cA(X,-)|_\cX\rar[two heads]&\Ext_\cA^1(A,-)|_\cX.
    \end{tikzcd}
    \]
    By Yoneda Lemma we thus obtain our wanted extension.\qedhere
    \end{proof}
\end{lemma}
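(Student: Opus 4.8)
The plan is to assemble the universal extension out of two ingredients: a projective presentation of $A$ (which exists because $\cA$ has enough projectives) and a left $\cX$-approximation of its first syzygy (which exists because $\cX$ is covariantly finite). First I would pick an epimorphism $P\to A$ with $P$ projective and form the short exact sequence $\Omega A\to P\to A$. Applying $\Hom_\cA(-,Y)$ and using $\Ext^1_\cA(P,Y)=0$, the resulting long exact sequence produces a connecting map $\Hom_\cA(\Omega A,Y)\to\Ext^1_\cA(A,Y)$ that is surjective and natural in $Y$; since epimorphisms of functors are detected objectwise, restricting to $Y\in\cX$ still yields an epimorphism of functors $\Hom_\cA(\Omega A,-)|_\cX\to\Ext^1_\cA(A,-)|_\cX$.

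Next I would choose a left $\cX$-approximation $\Omega A\to X$ with $X\in\cX$; by definition of approximation this makes $\Hom_\cA(X,-)|_\cX\to\Hom_\cA(\Omega A,-)|_\cX$ an epimorphism of functors. Composing the two gives an epimorphism $\Phi\colon\Hom_\cA(X,-)|_\cX\to\Ext^1_\cA(A,-)|_\cX$. Then I would invoke the Yoneda lemma over the full subcategory $\cX$: as $X\in\cX$, natural transformations $\Hom_\cA(X,-)|_\cX\to\Ext^1_\cA(A,-)|_\cX$ correspond bijectively to elements of $\Ext^1_\cA(A,X)$, so $\Phi$ is represented by a class $\xi\in\Ext^1_\cA(A,X)$, i.e. a short exact sequence $X\to E\to A$ with $X\in\cX\subseteq\add\cX$. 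The natural transformation attached to $\xi$ by Yoneda is by construction $\eta_X$, the map sending $g\colon X\to Y$ to the pushout of $\xi$ along $g$; hence $\eta_X=\Phi$ is an epimorphism, which is precisely the condition that $X\to E\to A$ be a universal extension from $A$ to $\cX$.

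The only genuinely non-formal point is the last step, namely matching the Yoneda-dual of $\xi$ with the map $\eta_X$ of the definition; this is the standard identification of the connecting homomorphism (equivalently the $\Ext$-pairing) with pushout along $\xi$, and it should be cited or briefly spelled out. Everything else is bookkeeping: surjectivity of the connecting map is the long exact sequence together with projectivity of $P$, and passing to the restriction along $\cX\hookrightarrow\cA$ preserves objectwise epimorphisms. Minimality plays no role in the existence argument; if in addition $\cA$ is Krull--Schmidt one may afterwards apply \Cref{Lemma:Decompose morphism in minimal} to the epimorphism $E\to A$ to split off summands of the form $X'\to X'\to 0$ and thereby extract a minimal universal extension.
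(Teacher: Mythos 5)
Your proof follows exactly the same route as the paper's: projective presentation $\Omega A\rightarrowtail P\twoheadrightarrow A$ to get a surjection $\Hom_\cA(\Omega A,-)\twoheadrightarrow\Ext^1_\cA(A,-)$, a left $\cX$-approximation of $\Omega A$ to precompose, and then Yoneda to realize the composite epimorphism as an actual extension. Your version is a bit more explicit (in particular spelling out that the Yoneda element corresponds to pushout along $\xi$, and noting where Krull--Schmidt would be used for minimality), but the argument is the same.
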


\introheaderTwo{Quotient categories.}
Before moving on from this section we also want to remind the reader of quotient categories (See e.g. \cite[Appendix~A.3]{assem2006elements}). Let $\cB$ be an additive category. A class of morphisms $\mathcal{I}$ in $\cB$ is an ideal if it satisfies the following.
\begin{itemize}
    \item for each $X\in \cB$, the zero morphism $0_X\in\Hom_\cB(X,\,X)$ belong to $\mathcal{I}$,
    \item if $f,g\colon X\to Y$ are both in $\mathcal{I}$, then $u\circ(f+g)\circ v\in\mathcal{I}$ for all morphisms $u\colon Y\to Z$ and $v\colon W\to X$.
\end{itemize}
Given an ideal $\mathcal{I}$ we let $\cB/\mathcal{I}$ denote the category with the same objects as $\cB$ and Hom-sets given by the quotient groups $\Hom_\cB(X,\,Y)/\mathcal{I}(X,\,Y)$. There is a canonical functor $\pi_\mathcal{I}\colon \cB\to \cB/\mathcal{I}$, acting as identity on objects and taking morphisms $f\in\Hom_\cB(X,\,Y)$ to their coset in $\Hom_\cB(X,\,Y)/\mathcal{I}(X,\,Y)$. In addition, this functor enjoy the property that any additive functor $F\colon \cB\to \cX$ such that $F(f)=0$ for all $f\in\mathcal{I}$ factors uniquely through $\pi_\mathcal{I}$.
\[
\begin{tikzcd}
    \cB\arrow{r}{F}\arrow[swap]{d}{\pi_\mathcal{I}}&\cX\\
    \cB/\mathcal{I}\arrow[dashed,swap]{ur}{\overline{F}}&
\end{tikzcd}
\]
If the class $\mathcal{I}$ is defined as all morphisms factoring through a class of objects $\cX$, we denote it as $[\cX]$.

\section{Main result}
In this section we will prove \Cref{thm: Main Theorem}. We start with a Wakamatsu-type lemma on universal extensions.

\begin{lemma}\label{Lemma:Wakamatsu-type}
    Let $\cX$ be a full additively closed subcategory of $\cA$ closed under extensions. If 
    $
    \begin{tikzcd}[column sep=1.5em]
        X\rar[tail]{f}&E\rar[two heads]{g}&Y
    \end{tikzcd}
    $
    is a minimal universal extension of $Y$ to $\cX$, then $E\in \extort\cX$.

    Moreover, $g$ is a right $\extort\cX$-approximation of $Y$.

    \begin{proof}
    Pick any 
    $
    \xi\colon
    \begin{tikzcd}[column sep=1.5em]
        X'\rar[tail]{}&U\rar[two heads]{h}&Y
    \end{tikzcd}
    \in\Ext^1_\cA(E,\,X')
    $
    
     with $X'\in \cX$. We can then construct the following pullback diagram
\[\begin{tikzcd}
	{X'} & {X'} \\
	L & U & Y \\
	X & E & Y
	\arrow[Rightarrow, no head, from=1-1, to=1-2]
	\arrow[tail, from=1-1, to=2-1]
	\arrow[tail, from=1-2, to=2-2]
	\arrow[tail, from=2-1, to=2-2]
	\arrow[two heads, from=2-1, to=3-1]
	\arrow["{g'}", two heads, from=2-2, to=2-3]
	\arrow["h", two heads, from=2-2, to=3-2]
	\arrow[Rightarrow, no head, from=2-3, to=3-3]
	\arrow[bend left, dashed, from=3-1, to=2-1]
	\arrow["f", tail, from=3-1, to=3-2]
	\arrow["{h'}", bend left, dashed, from=3-2, to=2-2]
	\arrow["g", two heads, from=3-2, to=3-3]
\end{tikzcd}\]
where $L\in\cX$ since $\cX$ is closed under extensions and the dashed arrows exist by the universality of the lower sequence. The morphism $g$ is right minimal and $g=g\circ h\circ h'$, so $h\circ h'$ is an automorphism and consequently $\xi$ splits.

The claim on $g$ being a right $\extort\cX$-approximation follows from the exact sequence
\[
\begin{tikzcd}[column sep=1.8em]
    \Hom_\cA(-,E)|_{\extort\cX}\rar{-\circ g}&\Hom_\cA(-,Y)|_{\extort\cX}\rar&\Ext_{\extort\cX}^1(-,X)|_{\extort\cX}=0
\end{tikzcd}
\]
associated to the universal extension.\qedhere
\end{proof}
\end{lemma}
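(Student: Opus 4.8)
The plan is to prove the Wakamatsu-type lemma in two halves, exactly along the lines the statement suggests. First I would establish that $E \in \extort\cX$, and then deduce the approximation claim as a formal consequence.

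\textbf{Showing $E \in \extort\cX$.} I want to show $\Ext^1_\cA(E, X') = 0$ for every $X' \in \cX$. So I would start with an arbitrary extension $\xi \colon X' \rightarrowtail U \xrightarrow{h} Y$... wait, that is not quite right — an element of $\Ext^1_\cA(E, X')$ is an extension $X' \rightarrowtail U \twoheadrightarrow E$. The idea is to pull it back along $g \colon E \to Y$ is wrong too; rather, I should combine it with the given universal extension. Concretely: take $\zeta \in \Ext^1_\cA(E, X')$ represented by $X' \rightarrowtail U \to E$. Composing (via the connecting map, i.e. pulling $\zeta$ back along... no) — the cleanest route is the one drawn in the diagram: form the pullback of $f \colon X \to E$ against $U \to E$, getting an object $L$ sitting in a short exact sequence $X' \rightarrowtail L \twoheadrightarrow X$, so $L \in \cX$ since $\cX$ is extension-closed. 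This produces a commutative $3\times 3$-type diagram with $U$ mapping onto $Y$ via $g' = g \circ h$ and $L$ in the top-left. Now the bottom row $X \xrightarrow{f} E \xrightarrow{g} Y$ is the universal extension, while the middle row $L \rightarrowtail U \twoheadrightarrow Y$ is \emph{another} short exact sequence ending in $Y$ with left term $L \in \cX$. By the universal (pushout) property of the bottom row (Remark on universal extension as pushout), there are dashed maps $X \to L$ over $X'$ and $h' \colon E \to U$ making the squares commute, in particular $h \circ h' = \mathrm{id}_Y$-compatible, i.e. $g = g' \circ h' = g \circ h \circ h'$. Since $g$ is right minimal (the universal extension is minimal), $h \circ h'$ is an automorphism of $E$; this forces the original extension $\zeta$ to split. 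Hence $\Ext^1_\cA(E, X') = 0$.

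\textbf{The approximation claim.} Once $E \in \extort\cX$, the sequence $X \xrightarrow{f} E \xrightarrow{g} Y$ induces, for any $W \in \extort\cX$, the long exact sequence $\Hom_\cA(W, E) \xrightarrow{g \circ -} \Hom_\cA(W, Y) \to \Ext^1_\cA(W, X)$, and the last term vanishes because $W \in \extort\cX$ and $X \in \cX$. Therefore every map $W \to Y$ with $W \in \extort\cX$ factors through $g$, which is precisely the statement that $g$ is a right $\extort\cX$-approximation of $Y$. (The excerpt writes this with $\Hom$-functors in the first variable restricted to $\extort\cX$; I would just present the long exact sequence of $\Hom(W,-)$ applied to the short exact sequence.)

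\textbf{Anticipated obstacle.} The one genuinely delicate point is the bookkeeping in the pullback/pushout diagram — making sure the dashed morphisms really exist and that the identity $g = g \circ (h \circ h')$ comes out, so that right-minimality of $g$ applies. I would need to be careful that the middle row $L \rightarrowtail U \twoheadrightarrow Y$ is honestly a short exact sequence (this uses that pullbacks in an abelian category preserve kernels/cokernels appropriately, and that $L \to X$ is epi with kernel $X'$), and that the pushout property of the universal extension is being invoked with the correct sequence. Everything else is a formal consequence of right-minimality and extension-closedness of $\cX$; no serious computation is involved.
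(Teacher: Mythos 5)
Your proposal is correct and follows essentially the same route as the paper's own proof: pull back $f\colon X\to E$ along $h\colon U\twoheadrightarrow E$ to produce $L\in\cX$ (extension-closed), observe that $L\rightarrowtail U\xrightarrow{g\circ h}Y$ is another short exact sequence ending in $Y$ with left term in $\cX$, invoke the pushout characterisation of the universal extension to obtain $h'\colon E\to U$ with $g=g\circ h\circ h'$, and then use right minimality of $g$ to conclude $h\circ h'$ is an automorphism, hence $\xi$ splits; the approximation claim then falls out of the $\Hom(W,-)$ long exact sequence. You also correctly flagged that the paper's displayed $\xi$ should end in $E$ rather than $Y$ (an inconsistency with the diagram, where $h$ is the vertical epimorphism $U\twoheadrightarrow E$ and $g'=g\circ h$ is the map to $Y$).
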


For a torsion pair $\torspair$ and a torsion-free object with a universal extension to $\tors$, we therefore can associate to objects in $\extort\tors$, and as we will see in the following simple lemma, objects in $\extort\tors$ are associated to torsion-free objects with universal extensions.

\begin{lemma}\label{Lemma: extort sent to universal extensions}
    Let $\torspair$ be a torsion pair in $\cA$. The restriction of the torsion-free functor $\ffunc\colon \cA\to\torsfree$ to $\extort\tors$ has as essential image the subcategory of objects in $\torsfree$ that admit a universal extension to $\tors$.
    \begin{proof}
    Let $X\in \extort\tors$ be any object. From the torsion sequence of $X$,
    \[
    \begin{tikzcd}
        \tfunc X\rar[tail]&X\rar[two heads]&\ffunc X ,
    \end{tikzcd}
    \]
    we obtain the exact sequence
    \[
    \begin{tikzcd}
        \Hom_\cA(\tfunc X,-)|_\tors\rar&\Ext_\cA^1(\ffunc X,-)|_\tors\rar&\Ext_\cA^1(X,-)|_\tors=0.
    \end{tikzcd}
    \]
    Hence, $\ffunc X$ admits a universal extension to $\tors$. Conversely, from \Cref{Lemma:Wakamatsu-type} we see that for any minimal universal extension to $\tors$ of an object $F\in\torsfree$ we have an object $E\in \extort\tors$ such that $\ffunc E=F$, namely the middle term of the minimal universal extension. This completes the proof.\qedhere
\end{proof}
\end{lemma}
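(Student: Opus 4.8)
The statement to prove is that $\ffunc|_{\extort\tors}$ has essential image exactly the subcategory $\univTorsfree$ of torsion-free objects admitting a universal extension to $\tors$. The plan is to prove two inclusions: (1) every object in the essential image admits a universal extension, i.e. $\ffunc(\extort\tors)\subseteq\univTorsfree$ up to isomorphism; and (2) every object of $\univTorsfree$ is in the essential image, i.e. $\univTorsfree\subseteq\ffunc(\extort\tors)$ up to isomorphism. The two directions use different earlier results, so I would treat them in turn.

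For direction (1), take $X\in\extort\tors$ and write down its torsion sequence $\tfunc X\rightarrowtail X\twoheadrightarrow\ffunc X$. Applying $\Hom_\cA(-,T)$ for $T\in\tors$ and using the long exact sequence gives an exact piece
\[
\Hom_\cA(\tfunc X,T)\longrightarrow\Ext_\cA^1(\ffunc X,T)\longrightarrow\Ext_\cA^1(X,T),
\]
and the last term vanishes since $X\in\extort\tors$, so $\Hom_\cA(\tfunc X,-)|_\tors\to\Ext_\cA^1(\ffunc X,-)|_\tors$ is an epimorphism of functors. Since $\tfunc X\in\tors=\add\tors$, this exhibits the torsion sequence (read as an element of $\Ext_\cA^1(\ffunc X,\tfunc X)$) as a universal extension of $\ffunc X$ to $\tors$, so $\ffunc X\in\univTorsfree$. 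For this to be clean I should note that $\tors$ being a torsion class is closed under summands, so ``$\add\tors=\tors$'' is automatic, and that the functorial epimorphism is precisely the condition in the definition of universal extension.

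For direction (2), given $F\in\univTorsfree$, pick a universal extension $X'\rightarrowtail E'\twoheadrightarrow F$ to $\tors$; by \Cref{Lemma:Decompose morphism in minimal} (and the remark following it) we may strip off summands of the form $X\xrightarrow{=}X\to 0$ to assume the extension is minimal. Now $\tors$ is closed under extensions (it is a torsion class), so \Cref{Lemma:Wakamatsu-type} applies with $\cX=\tors$ and yields $E'\in\extort\tors$. It remains to identify $F$ with $\ffunc E'$: the sequence $X'\rightarrowtail E'\twoheadrightarrow F$ has $X'\in\tors$ and $F\in\torsfree$, so by uniqueness of the torsion sequence (the Remark after the definition of torsion pair) it \emph{is} the torsion sequence of $E'$, whence $\ffunc E'\cong F$. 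Thus $F$ lies in the essential image.

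The only mild subtlety — the ``main obstacle'' if there is one — is bookkeeping around minimality in direction (2): one must invoke \Cref{Lemma:Decompose morphism in minimal} to pass to a minimal universal extension before \Cref{Lemma:Wakamatsu-type} can be applied, and then check that discarding split-off summands $X\rightarrowtail X\twoheadrightarrow 0$ does not change the cokernel $F$. Everything else is a direct application of the torsion long exact sequence and the uniqueness of torsion sequences, so the proof is short.
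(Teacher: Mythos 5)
Your proof is correct and follows essentially the same route as the paper: direction~(1) is the same long-exact-sequence argument applied to the torsion sequence of $X$, and direction~(2) invokes \Cref{Lemma:Wakamatsu-type} on a minimal universal extension and identifies it as the torsion sequence of its middle term. You spell out the passage to a minimal universal extension via \Cref{Lemma:Decompose morphism in minimal} and the uniqueness of torsion sequences more explicitly than the paper, but this is just filling in detail the paper leaves implicit, not a different argument.
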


The two previous lemmas provide a way to associate objects in $\extort\tors$ to objects in $\torsfree$ admitting universal extensions to $\tors$, and vice versa. However, only the restriction of the torsion-free functor is functorial in general. Hence, we need to restrict appropriately for the other to be functorial. For convenience we will be denoting the full subcategory of torsion-free objects admitting a universal extension to $\tors$ by $\univTorsfree$. We also fix a minimal extension for each object $X\in\univTorsfree$ and call it the minimal universal extension of $X$.

\begin{lemma}
    \label{Lemma:functorial wakamatsu}
    Let $\torspair$ be a torsion pair in $\cA$, and 
    $
    \xi\colon 
    \begin{tikzcd}[column sep=1.5em]
        T\rar[tail]{}&E\rar[two heads]{}&F
    \end{tikzcd}
    $
    the minimal universal extension of $F\in \univTorsfree$. The assignment $F\mapsto E$ gives a functor $\cotorsfunc\colon \univTorsfree\to \extort\tors/\mathcal{I}$, where $\mathcal{I}$ is the ideal of all morphisms in $\extort\tors$ factoring through objects in $\tors$.
\begin{proof}
    Let $f\colon F\to F'$ be a morphism in $\univTorsfree$. We construct the commutative diagram
    \[
    \begin{tikzcd}
        T\arrow[tail]{r}\arrow[dashed]{d}{}&E\arrow[two heads]{r}\arrow[dashed]{d}{\cotorsfunc f}&F\arrow{d}{f}\\
        T'\arrow[tail]{r}&E'\arrow[two heads]{r}&F'
    \end{tikzcd}
    \]
    where the rows are minimal universal extensions. The long exact sequence induced by $\Hom_\cA(E,\,-)$ and the lower row guarantees the existence of $\cotorsfunc f\colon E\to E'$ making the right square commutative, since $\Ext_\cA(E,\,T')=0$ by \Cref{Lemma:Wakamatsu-type}. Finally for well-definedness, observe that the zero morphism $0\colon F\to F'$ is sent to a morphism $\cotorsfunc 0$ which factors through $T'$. Hence, we obtain a functor $\cotorsfunc\colon \univTorsfree\to \extort\tors/\mathcal{I}$.
\end{proof}
\end{lemma}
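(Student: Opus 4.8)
The plan is to upgrade the pushout diagram in the statement to a genuine additive functor by verifying, in turn, four things: that the proposed target makes sense, that the value on a morphism is independent of the chosen lift, that identities and composites are respected, and that the assignment is additive. I would begin by fixing, for each $F\in\univTorsfree$, its chosen minimal universal extension $\xi_F\colon T_F\rightarrowtail E_F\xrightarrow{\pi_F}F$ with $T_F\in\tors$. Since $\tors$ is a torsion class it is closed under extensions, so \Cref{Lemma:Wakamatsu-type} applies with $\cX=\tors$ and gives $E_F\in\extort\tors$; hence $F\mapsto E_F$ genuinely lands in $\extort\tors$, and a fortiori in the quotient $\extort\tors/\mathcal{I}$.

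Next I would define $\cotorsfunc$ on morphisms. Given $f\colon F\to F'$, apply $\Hom_\cA(E_F,-)$ to $\xi_{F'}$: because $E_F\in\extort\tors$ and $T_{F'}\in\tors$ we have $\Ext^1_\cA(E_F,T_{F'})=0$, so $\pi_{F'}\circ-\colon\Hom_\cA(E_F,E_{F'})\to\Hom_\cA(E_F,F')$ is surjective and $f\circ\pi_F$ admits a lift $\cotorsfunc f\colon E_F\to E_{F'}$ with $\pi_{F'}\circ\cotorsfunc f=f\circ\pi_F$. If $\varphi,\varphi'$ are two such lifts then $\pi_{F'}\circ(\varphi-\varphi')=0$, so $\varphi-\varphi'$ factors through $\kernel\pi_{F'}=T_{F'}\in\tors$; as its source $E_F$ and target $E_{F'}$ both lie in $\extort\tors$, this difference belongs to $\mathcal{I}(E_F,E_{F'})$. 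Therefore the class of $\cotorsfunc f$ in $\extort\tors/\mathcal{I}$ is well defined. Specializing to $f=0$ recovers the observation that $\cotorsfunc 0$ factors through $T_{F'}$, and specializing to $f=\mathrm{id}_F$, lifted by $\mathrm{id}_{E_F}$, gives $\cotorsfunc(\mathrm{id}_F)=\mathrm{id}_{E_F}$ in the quotient.

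Finally I would check functoriality and additivity. For $f\colon F\to F'$ and $g\colon F'\to F''$ in $\univTorsfree$, the composite $\cotorsfunc g\circ\cotorsfunc f$ satisfies $\pi_{F''}\circ(\cotorsfunc g\circ\cotorsfunc f)=(g\circ f)\circ\pi_F$, so it is itself a valid lift of $g\circ f$ and hence represents $\cotorsfunc(g\circ f)$ in $\extort\tors/\mathcal{I}$; likewise $\cotorsfunc f+\cotorsfunc f'$ lifts $f+f'$, which gives additivity. The only substantive input here is the vanishing of the relevant $\Ext^1$ groups, supplied by the Wakamatsu-type \Cref{Lemma:Wakamatsu-type}; the point requiring a moment's care — and the reason the target is the quotient $\extort\tors/\mathcal{I}$ of $\extort\tors$ rather than some quotient of $\cA$ — is precisely that the ambiguity in the lift must be seen to land in $\mathcal{I}$, i.e. to be a morphism \emph{between objects of $\extort\tors$} that factors through $\tors$, and not merely an arbitrary map factoring through $\tors$.
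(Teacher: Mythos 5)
Your proof is correct and follows the same route as the paper: you apply the Wakamatsu-type lemma (\Cref{Lemma:Wakamatsu-type}) to see $E_F \in \extort\tors$, lift $f\circ\pi_F$ along $\pi_{F'}$ using the vanishing $\Ext^1_\cA(E_F,T_{F'})=0$, and observe that the ambiguity in the lift factors through $T_{F'}\in\tors$ and so dies in the ideal $\mathcal{I}$. You are merely more explicit than the paper in spelling out the checks of identity, composition, and additivity, which the paper compresses into ``Hence, we obtain a functor'' after noting that any lift of $0$ factors through $T'$ (from which independence of the lift, and hence functoriality and additivity, follow formally).
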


\begin{remark}\label{Remark: Middle term universal extension}
    We can think of the objects in $\extort\tors/\mathcal{I}$ as those in $\extort\tors$ with no summands in $\tors$. 
    Also, a universal extension $\epsilon\,\colon \begin{tikzcd}
        T\rar[tail]&E\rar[two heads]&F
    \end{tikzcd}$ from $F\in \torsfree$ to $\tors$ is minimal if and only if $E$ has no summand in $\tors$. One direction is proven contrapositively. Let
    \[
    \epsilon\,\colon 
    \begin{tikzcd}[]
        T\rar[tail]{}&E\rar[two heads]{}&F
    \end{tikzcd}
    \]
    be a universal extension from $F$ to $\tors$, and assume that $E\cong E'\oplus T'$ with $T'\in \tors$. Then $\epsilon$ is isomorphic to
    \[
    \begin{tikzcd}[ampersand replacement=\&]
        T''\oplus T'\rar[tail]{}\&E'\oplus T'\rar[two heads]{\begin{psmallmatrix}f&0\end{psmallmatrix}}\&F
    \end{tikzcd}
    \]
    for some morphism $f\colon E'\to F$. This contradicts the minimality of $\epsilon$. The other direction is shown similarly using the decomposition in \Cref{Lemma:Decompose morphism in minimal} and the fact that $\tors$ is additively closed.
\end{remark}

We are now ready to prove \Cref{thm: Main Theorem}.

\begin{thm}[{\Cref{thm: Main Theorem}}]\label{Theorem: Main result}
    Let $\cA$ be a Krull--Schmidt abelian category and let $\torspair$ be a torsion pair in $\cA$. The torsion-free functor $\ffunc\colon \cA\to \torsfree$ induce an equivalence
    \begin{equation}\label{eq:equivalence main result in text}
    \frac{\extort\tors}{\mathcal{I}}\simeq \univTorsfree
    \end{equation}
    where $\mathcal{I}$ is the ideal in $\extort\tors$ of morphisms factoring through $\tors$.
\end{thm}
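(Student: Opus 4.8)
The plan is to promote the torsion-free functor to the asserted equivalence directly: show that $\ffunc|_{\extort\tors}$ descends to a functor $\overline{\ffunc}\colon\extort\tors/\mathcal{I}\to\univTorsfree$ and that $\overline{\ffunc}$ is dense, full and faithful. The functor $\cotorsfunc$ of \Cref{Lemma:functorial wakamatsu} will then automatically be a quasi-inverse.

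First I would check well-definedness. If $f\colon X\to Y$ in $\extort\tors$ factors through an object $T\in\tors$, then $\ffunc f$ factors through $\ffunc T=0$; thus $\ffunc$ annihilates $\mathcal{I}$ and, by the universal property of $\pi_\mathcal{I}$, factors through the quotient. That the values $\ffunc X$ really lie in $\univTorsfree$ — so the codomain is as stated — is precisely \Cref{Lemma: extort sent to universal extensions}, and the same lemma delivers density, since it identifies the essential image of $\ffunc|_{\extort\tors}$ with $\univTorsfree$.

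It remains to prove $\overline{\ffunc}$ is full and faithful. Write $q_X\colon X\twoheadrightarrow\ffunc X$ for the torsion quotient maps, and recall that $q_Y\circ f=(\ffunc f)\circ q_X$ (the functoriality square in the opening remark). For \emph{faithfulness}, suppose $\ffunc f=0$ for $f\colon X\to Y$ in $\extort\tors$; then $q_Y\circ f=0$, so $f$ factors through $\kernel q_Y=\tfunc Y\in\tors$, i.e.\ $f\in\mathcal{I}$. For \emph{fullness}, given $g\colon\ffunc X\to\ffunc Y$ apply $\Hom_\cA(X,-)$ to the torsion sequence $\tfunc Y\rightarrowtail Y\twoheadrightarrow\ffunc Y$ to obtain the exact sequence
$$
\Hom_\cA(X,Y)\longrightarrow\Hom_\cA(X,\ffunc Y)\longrightarrow\Ext_\cA^1(X,\tfunc Y),
$$
whose last term vanishes because $X\in\extort\tors$ and $\tfunc Y\in\tors$. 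Hence $g\circ q_X$ lifts to some $f\colon X\to Y$ with $q_Y\circ f=g\circ q_X$; comparing with $q_Y\circ f=(\ffunc f)\circ q_X$ and cancelling the epimorphism $q_X$ yields $\ffunc f=g$. Being dense, full and faithful, $\overline{\ffunc}$ is an equivalence.

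I do not anticipate a genuine obstacle: the substantive content is already packed into \Cref{Lemma:Wakamatsu-type} (which invokes Krull--Schmidt via \Cref{Lemma:Decompose morphism in minimal}) and \Cref{Lemma: extort sent to universal extensions}. The only points that need a moment's care are that $q_Y\circ f=g\circ q_X$ genuinely certifies $\ffunc f=g$ — immediate once one recalls how $\ffunc$ acts on morphisms — and, if one wishes to exhibit the quasi-inverse explicitly, that for $F\in\univTorsfree$ with minimal universal extension $T\rightarrowtail E\twoheadrightarrow F$ uniqueness of torsion sequences identifies this with the torsion sequence of $E$, whence $\overline{\ffunc}(\cotorsfunc F)=\ffunc E\cong F$ naturally.
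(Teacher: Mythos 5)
Your proof is correct, and it takes a genuinely different route to the equivalence. The paper constructs the quasi-inverse $\cotorsfunc$ from \Cref{Lemma:functorial wakamatsu} explicitly and then verifies both compositions $\mathfrak{F}\circ\cotorsfunc\cong\mathrm{id}$ and $\cotorsfunc\circ\mathfrak{F}\cong\mathrm{id}$ by diagram chases with torsion sequences and minimal universal extensions. You instead prove directly that $\overline{\ffunc}$ is dense, full and faithful. Density in both cases is the content of \Cref{Lemma: extort sent to universal extensions}, and it is the only place the Krull--Schmidt hypothesis enters (through \Cref{Lemma:Wakamatsu-type} and the existence of minimal universal extensions). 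Your faithfulness argument — $\ffunc f=0$ forces $q_Y\circ f=0$, so $f$ factors through $\kernel q_Y=\tfunc Y\in\tors$ — exactly characterizes $\mathcal{I}$ as the kernel ideal of $\ffunc|_{\extort\tors}$. Your fullness argument is the cleanest step: applying $\Hom_\cA(X,-)$ to the torsion sequence of $Y$ and invoking $\Ext_\cA^1(X,\tfunc Y)=0$ puts the definition of $\extort\tors$ to work directly, avoiding the two-page diagram verification in the paper. The trade-off is that the paper's approach produces the explicit quasi-inverse $\cotorsfunc$, which it then reuses in the tube computations of Section~5; with your proof one would still want to invoke \Cref{Lemma:functorial wakamatsu} separately to have that description available. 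As a proof of the bare statement, yours is shorter and more conceptual.
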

\begin{proof}
    We observe that since $\ffunc(\tors)=0$, we obtain a functor $\overline{\ffunc}\colon \cA/[\tors]\to\torsfree$ making the following diagram commute.
    \[
    \begin{tikzcd}
        \cA\arrow{r}{\ffunc}\arrow[swap,two heads]{d}{\pi_{[\tors]}}&\torsfree\\
        \cA/[\tors]\arrow[dashed,swap]{ur}{\overline{\ffunc}}
    \end{tikzcd}
    \]
    The restriction of $\pi_{[\tors]}$ to $\extort\tors$ has $\extort\tors/\mathcal{I}$ as essential image, so by appropriate restrictions we obtain the following commutative diagram,
    \[
    \begin{tikzcd}
        \extort\tors\arrow{r}{\ffunc}\arrow[two heads,swap]{d}{\pi_\mathcal{I}}&\torsfree\\
        \extort\tors/\mathcal{I}\arrow[dashed]{ur}{\overline{\ffunc}}\arrow[dotted,two heads]{r}{\mathfrak{F}}&\univTorsfree\arrow[tail,dotted,swap]{u}{\mathrm{inc}}
    \end{tikzcd}
    \]
    where the factorization of $\overline{\ffunc}$ through $\univTorsfree$ arise from \Cref{Lemma: extort sent to universal extensions}.
    We now claim that $\mathfrak{F}$ and $\cotorsfunc\colon \univTorsfree\to\extort\tors/\mathcal{I}$ from \Cref{Lemma:functorial wakamatsu} are quasi-inverse functors. 
    
    \textbf{\underline{$\mathfrak{F}\circ \cotorsfunc\cong \mathrm{id}_\univTorsfree$:}} We start by recalling that for a short exact sequence 
    $
    \begin{tikzcd}[column sep=1.5em]
        T\rar[tail]&E\rar[two heads]&F
    \end{tikzcd}
    $
    with $T\in \tors$ and $F\in \torsfree$, we have $F\cong \ffunc E$. Let now $f\colon F\to F'$ be a morphism in $\univTorsfree$, then we have as in the proof of \Cref{Lemma:functorial wakamatsu} a commutative diagram
    \[
    \begin{tikzcd}
        T\arrow[tail]{r}\arrow[dashed]{d}{\iota}&E\arrow[two heads]{r}\arrow[dashed]{d}{\cotorsfunc f}&F\arrow{d}{f}\\
        T'\arrow[tail]{r}&E'\arrow[two heads]{r}&F'
    \end{tikzcd}
    \]
    where the rows are torsion sequences of $E$ and $E'$ respectively. As $\ffunc\colon \cA\to \torsfree$ sends morphisms uniquely, we then see that $\mathfrak{F}(\cotorsfunc f)=f$, and hence $\mathfrak{F}\circ \cotorsfunc\cong \mathrm{id}_\univTorsfree$.

    \textbf{\underline{$\cotorsfunc\circ\mathfrak{F}\cong \mathrm{id}_{\extort\tors/\mathcal{I}}$:}} Let $X$ be an object in $\extort\tors$ without summands in $\tors$. \Cref{Lemma: extort sent to universal extensions} and \Cref{Remark: Middle term universal extension} tells us that the torsion sequence
    \[
    \begin{tikzcd}
        \tfunc X\rar[tail] &X\rar[two heads]&\ffunc X
    \end{tikzcd}
    \]
    is a minimal universal extension. It follows from minimal universal extensions being unique up to (non-unique) isomorphisms, that we may construct the following commutative diagram
    \[
    \begin{tikzcd}
        \tfunc X\rar[tail]\dar{\cong} &X\rar[two heads]\arrow["\cong","\varepsilon_X"']{d}& \ffunc X\dar[equal]\\
        T\rar[tail]& E\rar[two heads]&\ffunc X
    \end{tikzcd}
    \]
    where the lower row is the fixed universal extension of $\ffunc E$, showing that $\cotorsfunc\circ \mathfrak{F}(X)\cong X$.

    Let now $f\in\Hom_{\extort\tors/\mathcal{I}}(X,X')$ be any morphism in $\extort\tors/\mathcal{I}$, and $g\in \Hom_{\extort\tors}(X,X')$ in the preimage of $f$ under $\pi_\mathcal{I}$. We then construct the following commutative diagram where the two rows in the middle are torsion sequences and the upper (lower) row are the minimal universal extension of $\ffunc X$ ($\ffunc X'$) into $\tors$:
    \[
    \begin{tikzcd}
	T & E & {\ffunc X} \\
	{\tfunc X} & X & {\ffunc X} \\
	{\tfunc X'} & {X'} & {\ffunc X'} \\
	{T'} & {E'} & {\ffunc X'}
	\arrow[tail,from=1-1, to=1-2]
	\arrow["\cong", from=1-1, to=2-1]
	\arrow[two heads,from=1-2, to=1-3]
	\arrow["\cong","\varepsilon_{X}"', from=1-2, to=2-2]
	\arrow[equal,from=1-3, to=2-3]
	\arrow[tail,from=2-1, to=2-2]
	\arrow["{\tfunc g}",swap, from=2-1, to=3-1]
	\arrow[two heads,from=2-2, to=2-3]
	\arrow["g", from=2-2, to=3-2]
	\arrow["{\ffunc g}", from=2-3, to=3-3]
	\arrow[tail,from=3-1, to=3-2]
	\arrow["\cong", from=3-1, to=4-1]
	\arrow[two heads,from=3-2, to=3-3]
	\arrow["\cong"',"\varepsilon_{X'}", from=4-2, to=3-2]
	\arrow[equal,from=3-3, to=4-3]
	\arrow[tail,from=4-1, to=4-2]
	\arrow[two heads,from=4-2, to=4-3]
    \end{tikzcd}
    \]
    The well-definedness of $\cotorsfunc$ now tells us that 
    $$
    f\cong\pi_\mathcal{I}(\varepsilon_X\circ g\circ \varepsilon_{X'}^{-1})=\cotorsfunc\pi_\mathcal{I}(\ffunc g)=\cotorsfunc\mathfrak{F}f,
    $$
    finishing the proof.
\end{proof}

\section{When all objects admit universal extensions}
The motivation for the article as stated in the introduction, was to generalize Corollary~4.8 in \cite{Buan2021tauTilting} by removing the assumption on functorially finiteness of the torsion pairs. We will now verify that our results do in fact specialize back when the torsion pairs are functorially finite. We therefore show the following corollary of \Cref{Theorem: Main result}, whose assumptions in particular hold when the torsion pair is functorially finite and $\cA$ has enough projectives, see \Cref{Lemma: Covariantly finite gives universal}.

\begin{corol}\label{Lem:func fin tors give old equivalence}
    Let $\cA$ be a Krull--Schmidt abelian category and let $\torspair$ be a torsion pair in $\cA$. If all objects in $\tors$ and $\torsfree$ admit a universal extension to $\tors$, then we have an equivalence
    \begin{equation}\label{eq:equivalence to generalize in text}
    \frac{\extort\tors}{\extort\tors\cap\tors}\simeq \torsfree.
    \end{equation}

    \begin{proof}
        The right hand side of \eqref{eq:equivalence main result in text} clearly coincide with that of \eqref{eq:equivalence to generalize in text} when all objects in $\torsfree$ have universal extensions to $\tors$. We are thus only concerned with showing that the left hand sides also coincide. 
        
        Let $f\in\Hom_\cA(X,X')$ be a morphism in $\extort\tors$ factoring through $\tors$, specifically we can find morphisms $\phi\in\Hom_\cA(X,Y)$ and $\psi\in\Hom_\cA(Y,X')$ for $Y\in\tors$ such that $f=\psi\circ \phi$. Further, let 
        $\begin{tikzcd}[column sep=1.5em]
            T\rar[tail]&E\rar[two heads]{g}&Y
        \end{tikzcd}$
        be a minimal universal extension of $Y$ to $\tors$. From this we construct the following commutative diagram
        \[
        \begin{tikzcd}
            &&X\arrow[dashed]{dl}\dar{\phi}\\
            T\rar[tail]&E\rar[two heads]{g}&Y
        \end{tikzcd}
        \]
        where the dashed arrow exists since $g$ is a right $\extort\tors$ approximation of $Y$ as observed in \Cref{Lemma:Wakamatsu-type}. Now, since $\tors$ is closed under extensions we see that $E\in \extort\tors\cap\tors$. Hence, every morphism from $\extort\tors$ to $\tors$ factors through $\extort\tors\cap\tors$, and the proof is done. \qedhere
\end{proof}
\end{corol}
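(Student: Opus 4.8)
The plan is to derive the corollary from \Cref{Theorem: Main result}, which already gives an equivalence $\extort\tors/\mathcal{I}\simeq\univTorsfree$, where $\mathcal{I}$ is the ideal of morphisms in $\extort\tors$ factoring through $\tors$. So the task reduces to matching both sides of the asserted equivalence against the two sides of that one.

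First I would note that the right-hand sides agree immediately: $\univTorsfree$ is by definition the full subcategory of $\torsfree$ consisting of objects that admit a universal extension to $\tors$, and the hypothesis says precisely that this is all of $\torsfree$, so $\univTorsfree=\torsfree$.

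Next, for the left-hand sides I would show that the ideal $\mathcal{I}$ coincides with the ideal $[\extort\tors\cap\tors]$ of morphisms factoring through an object of $\extort\tors\cap\tors$. The containment $[\extort\tors\cap\tors]\subseteq\mathcal{I}$ is trivial since $\extort\tors\cap\tors\subseteq\tors$. For the reverse, take $f\in\Hom_\cA(X,X')$ in $\extort\tors$ with $f=\psi\circ\phi$ for morphisms $\phi\colon X\to Y$, $\psi\colon Y\to X'$ and $Y\in\tors$. Applying the hypothesis to $Y\in\tors$, fix a minimal universal extension of $Y$ to $\tors$ with middle term $E$. Since $\tors$ is closed under extensions, $E\in\tors$; and by \Cref{Lemma:Wakamatsu-type}, $E\in\extort\tors$ while the surjection $E\to Y$ is a right $\extort\tors$-approximation of $Y$. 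Hence $\phi$ factors through $E\to Y$, so $f$ factors through $E\in\extort\tors\cap\tors$. This yields $\mathcal{I}\subseteq[\extort\tors\cap\tors]$, hence equality, and the corollary follows.

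The one genuinely load-bearing step will be the invocation of \Cref{Lemma:Wakamatsu-type}: it is what turns ``$Y$ admits a universal extension to $\tors$'' into the approximation property that lets us push a factorization through an arbitrary torsion object $Y$ down to a factorization through the smaller object $E\in\extort\tors\cap\tors$. Everything else --- recognizing $\univTorsfree=\torsfree$ and checking the two ideals coincide --- is purely formal bookkeeping about quotient categories.
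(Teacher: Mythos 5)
Your proposal is correct and follows essentially the same route as the paper: reduce to \Cref{Theorem: Main result}, observe $\univTorsfree=\torsfree$ under the hypothesis, and then show $\mathcal{I}=[\extort\tors\cap\tors]$ by taking a minimal universal extension of the intermediate torsion object $Y$ and using \Cref{Lemma:Wakamatsu-type} both to place the middle term $E$ in $\extort\tors\cap\tors$ and to factor $\phi$ through the right $\extort\tors$-approximation $E\to Y$. The only cosmetic difference is that you explicitly record the trivial containment $[\extort\tors\cap\tors]\subseteq\mathcal{I}$, which the paper leaves implicit.
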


\begin{example}\label{ex:torsion pair A3}
    Consider the path algebra of .
    \[
    \begin{tikzpicture}
        \node at (0,0) []{$\overset{\rightarrow}{\qA}_3\colon$};

        \node (3) at (1,0) []{$\bullet$};
        \node (2) at (2,0) []{$\bullet$};
        \node (1) at (3,0) []{$\bullet$};
        \draw[-latex] (3.east)--(2.west);
        \draw[-latex] (2.east)--(1.west);
        \node at (3) [below,scale=.8]{$3$};
        \node at (2) [below,scale=.8]{$2$};
        \node at (1) [below,scale=.8]{$1$};
    \end{tikzpicture}.
    \]
    Let $\torsfree$ be the subcategory of $\Modf\K\qA_3$ additively generated by all the projectives and the injective at $3$, and $\tors$ the subcategory additively generated by the simple at $2$. In the AR-quiver of $\K\qA_3$ below we have marked the indecomposable in $\tors$ by a square, the indecomposables in $\torsfree$ as filled circles and the indecomposable in neither by a non-filled circle.
    \[
    \begin{tikzpicture}[scale=.7,tips=proper]
        \coordinate (S1) at (0,0);
        \coordinate (S2) at (2,0);
        \coordinate (S3) at (4,0);
        \coordinate (P2) at (1,1);
        \coordinate (P3) at (2,2);
        \coordinate (I2) at (3,1);
        
        \node[label={below:\small $1$}] at (S1)  {$\bullet$};
        \node[label={below:\small $2$}] at (S2) {$\square$};
        \node[label={below:\small $3$}] at (S3) {$\bullet$};
        \node at (P2) [] {$\bullet$};
        \node at (I2) [] {$\circ$};
        \node at (P3) [] {$\bullet$};

        \draw[-stealth] 
            {($(S1.center)+(.2,.2)$) edge ($(P2.center)-(.2,.2)$)}
            {($(P2.center)+(.2,.2)$) edge ($(P3.center)-(.2,.2)$)}
            {($(P2.center)+(.2,-.2)$) edge ($(S2.center)-(.2,-.2)$)}
            {($(S2.center)+(.2,.2)$) edge ($(I2.center)-(.2,.2)$)}
            {($(P3.center)+(.2,-.2)$) edge ($(I2.center)-(.2,-.2)$)}
            {($(I2.center)+(.2,-.2)$) edge ($(S3.center)-(.2,-.2)$)};
        \draw[dashed]
            {($(S1.center)+(.2,0)$) edge ($(S2.center)-(.2,0)$)} 
            {($(S2.center)+(.2,0)$) edge ($(S3.center)-(.2,0)$)} 
            {($(P2.center)+(.2,0)$) edge ($(I2.center)-(.2,0)$)} ;
    \end{tikzpicture}
    \]
    We have illustrated the equivalence in \eqref{eq:equivalence to generalize in text} as follows. In the left hand side picture we have shaded the indecomposables in $\extort\tors\setminus \tors$ and in the right hand side picture we have shaded $\univTorsfree=\torsfree$.
    \[
    \begin{tikzpicture}[scale=.7,tips=proper]
        \coordinate (S1) at (0,0);
        \coordinate (S2) at (2,0);
        \coordinate (S3) at (4,0);
        \coordinate (P2) at (1,1);
        \coordinate (P3) at (2,2);
        \coordinate (I2) at (3,1);
        
        \node[] at (S1)  {$\bullet$};
        \node[] at (S2) {$\square$};
        \node[] at (S3) {$\bullet$};
        \node at (P2) [] {$\bullet$};
        \node at (I2) [] {$\circ$};
        \node at (P3) [] {$\bullet$};

        \draw[-stealth] 
            {($(S1.center)+(.2,.2)$) edge ($(P2.center)-(.2,.2)$)}
            {($(P2.center)+(.2,.2)$) edge ($(P3.center)-(.2,.2)$)}
            {($(P2.center)+(.2,-.2)$) edge ($(S2.center)-(.2,-.2)$)}
            {($(S2.center)+(.2,.2)$) edge ($(I2.center)-(.2,.2)$)}
            {($(P3.center)+(.2,-.2)$) edge ($(I2.center)-(.2,-.2)$)}
            {($(I2.center)+(.2,-.2)$) edge ($(S3.center)-(.2,-.2)$)};
        \draw[dashed]
            {($(S1.center)+(.2,0)$) edge ($(S2.center)-(.2,0)$)} 
            {($(S2.center)+(.2,0)$) edge ($(S3.center)-(.2,0)$)} 
            {($(P2.center)+(.2,0)$) edge ($(I2.center)-(.2,0)$)} ;

        \begin{scope}[on background layer]
            \fill[pattern=crosshatch dots,pattern color=gray!40,rounded corners=3pt] ($(P3.center)+(0,.4)$)--($(S1.center)-(.4,0)$)--($(S1.center)+(0,-.4)$)--($(P3.center)-(0,0.4)$)--($(I2.center)-(0,.4)$)--($(I2.center)+(0.4,0)$)--cycle;
        
        \end{scope}
        \draw[thick,rounded corners=3pt] ($(P3.center)+(0,.4)$)--($(S1.center)-(.4,0)$)--($(S1.center)+(0,-.4)$)--($(P3.center)-(0,0.4)$)--($(I2.center)-(0,.4)$)--($(I2.center)+(0.4,0)$)--cycle;
    \end{tikzpicture}
    \qquad
    \begin{tikzpicture}[scale=.7,tips=proper]
        \coordinate (S1) at (0,0);
        \coordinate (S2) at (2,0);
        \coordinate (S3) at (4,0);
        \coordinate (P2) at (1,1);
        \coordinate (P3) at (2,2);
        \coordinate (I2) at (3,1);
        
        \node[] at (S1)  {$\bullet$};
        \node[] at (S2) {$\square$};
        \node[] at (S3) {$\bullet$};
        \node at (P2) [] {$\bullet$};
        \node at (I2) [] {$\circ$};
        \node at (P3) [] {$\bullet$};

        \draw[-stealth] 
            {($(S1.center)+(.2,.2)$) edge ($(P2.center)-(.2,.2)$)}
            {($(P2.center)+(.2,.2)$) edge ($(P3.center)-(.2,.2)$)}
            {($(P2.center)+(.2,-.2)$) edge ($(S2.center)-(.2,-.2)$)}
            {($(S2.center)+(.2,.2)$) edge ($(I2.center)-(.2,.2)$)}
            {($(P3.center)+(.2,-.2)$) edge ($(I2.center)-(.2,-.2)$)}
            {($(I2.center)+(.2,-.2)$) edge ($(S3.center)-(.2,-.2)$)};
        \draw[dashed]
            {($(S1.center)+(.2,0)$) edge ($(S2.center)-(.2,0)$)} 
            {($(S2.center)+(.2,0)$) edge ($(S3.center)-(.2,0)$)} 
            {($(P2.center)+(.2,0)$) edge ($(I2.center)-(.2,0)$)} ;

        \begin{scope}[on background layer]
        \fill[pattern=north west lines,pattern color=gray!40,rounded corners=3pt] ($(S3.center)-(0.25,.2)$)--($(S3.center)+(0.25,-.2)$)--($(S3.center)+(0.25,.25)$)--
        ($(S3.center)+(-0.25,.25)$)--cycle;
        \fill[pattern=north west lines,pattern color=gray!40,rounded corners=3pt] ($(P3.center)+(0,.4)$)--($(S1.center)-(.4,0)$)--($(S1.center)+(0,-.4)$)--($(P3.center)+(0.4,0)$)--cycle;
        
    \end{scope}
    
    \draw[thick,rounded corners=3pt] ($(S3.center)-(0.25,.2)$)--($(S3.center)+(0.25,-.2)$)--($(S3.center)+(0.25,.25)$)--
        ($(S3.center)+(-0.25,.25)$)--cycle;
    \draw[thick,rounded corners=3pt] ($(P3.center)+(0,.4)$)--($(S1.center)-(.4,0)$)--($(S1.center)+(0,-.4)$)--($(P3.center)+(0.4,0)$)--cycle;
    \end{tikzpicture}\qedhere
    \]
\end{example}

In the next section we will consider Krull-Schmidt categories without enough projectives. However, using a similar construction as in the proof of Bongartz' lemma \cite[Lemma~2.1]{Bon81}, we can see that since they are Ext-finite (all Ext-groups have finite dimension), certain torsion classes still satisfy the condition of \Cref{Lem:func fin tors give old equivalence}. The following is a sketch of the construction.

Let $\cA$ be an abelian $\K$-category, and for an object $X\in\cA$ let $\cX=\add(X)$. Assume now that for $Y\in \cA$ we have $d=\mathrm{dim}\Ext_\cA^1(Y,X)<\infty$. Let 
$\mathbb{E}_i\colon\begin{tikzcd}[column sep=1.5em]
    X\rar[tail]&E_i\rar[two heads]&Y
\end{tikzcd}$ for $1\leq i\leq d$ be representatives of the basis elements of $\Ext_\cA(Y,X)$. 
We then construct the following pullback diagram
\[\tikzset{pullback/.style={minimum size=1.2ex,path picture={
\draw[opacity=1,black,-] (0.5ex,-0.5ex) -- (0.5ex,0.5ex) -- (-0.5ex,0.5ex);%
}}}
\begin{tikzcd}
    X^d\arrow[tail]{r}&\bigoplus_{i=1}^d E_i\arrow[two heads]{r}&Y^d\\
    X^d\arrow[equal]{u}\arrow[tail]{r}&E\arrow{u}\arrow[ur, phantom," " {pullback}, very near start]\arrow[two heads]{r}&Y\arrow[swap]{u}{\begin{bsmallmatrix}
        1\\1\\\vdots\\1
    \end{bsmallmatrix}}
\end{tikzcd}
\]
where the upper row is the direct sum of $\mathbb{E}_i$ for $1\leq i\leq d$. The lower row can be shown to be a universal extension from $Y$ to $\cX$. Thus we have the following lemma.

\begin{lemma}\label{lemma: ext finite add gen tors enough}
    Let $\cA$ be an Ext-finite abelian $\K$-category and $\cX=\add(X)$ a subcategory. Then every object $Y\in\cA$ admit a universal extension to $\cX$.
\end{lemma}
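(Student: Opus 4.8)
The plan is to make precise the pullback construction sketched just before the statement, and to verify the universality using the standard long exact sequences. The key point is that $\cX = \add(X)$ is generated by a single object, so an epimorphism of functors $\Hom_\cA(X^d,-)|_\cX \twoheadrightarrow \Ext^1_\cA(Y,-)|_\cX$ can be checked after evaluating at $X$ alone; every object of $\cX$ is a summand of a power of $X$, and both functors commute with finite products and send summands to summands, so surjectivity at $X$ propagates to all of $\cX$.

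First I would fix a basis $\xi_1,\dots,\xi_d$ of $\Ext^1_\cA(Y,X)$ — here $d<\infty$ by Ext-finiteness — and represent each $\xi_i$ by a short exact sequence $\mathbb{E}_i$ as in the excerpt. Taking the direct sum gives $\bigoplus_i \mathbb{E}_i \in \Ext^1_\cA(Y^d, X^d)$, and then I would pull it back along the diagonal $\Delta\colon Y\to Y^d$ to obtain a short exact sequence $\epsilon\colon\begin{tikzcd}[column sep=1.5em] X^d\rar[tail]&E\rar[two heads]&Y\end{tikzcd}$; the left-hand term stays $X^d$ because pullback does not change the sub-object. The class of $\epsilon$ in $\Ext^1_\cA(Y,X^d)$ is $\Delta^*$ applied to $\bigoplus_i\xi_i$, which under the identification $\Ext^1_\cA(Y,X^d)\cong \Ext^1_\cA(Y,X)^d$ is exactly $(\xi_1,\dots,\xi_d)$.

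Next I would compute the connecting map of the functor morphism $\eta\colon \Hom_\cA(X^d,-)\to \Ext^1_\cA(Y,-)$ determined by $\epsilon$ (via the long exact sequence in $\Ext_\cA(-,Z)$ associated to $\epsilon$, evaluated through $\Hom_\cA(X^d,Z)\to\Ext^1_\cA(Y,Z)$, which is the Yoneda push of $\epsilon$). Evaluating at $Z=X$, the map $\Hom_\cA(X^d,X)\to \Ext^1_\cA(Y,X)$ sends an endomorphism tuple $(g_1,\dots,g_d)$ to $\sum_i (g_i)_*\xi_i$; taking $g_i$ to run over projections and identities shows the image contains each $\xi_i$, hence all of $\Ext^1_\cA(Y,X)$. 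So $\eta_X$ is surjective, and by the summand/product argument above $\eta_Z$ is surjective for every $Z\in\cX$. That is exactly the definition of a universal extension from $Y$ to $\cX$, completing the proof.

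The main obstacle I anticipate is purely bookkeeping rather than conceptual: one must be careful that the functor morphism induced by the pulled-back sequence $\epsilon$ really does evaluate, at $X$, to the ``multiply by the basis classes'' map, i.e. that pullback along $\Delta$ followed by the Yoneda pairing behaves as claimed. This is a standard compatibility of Yoneda Ext with pullbacks and pushouts, but it is the step where sign/identification errors creep in, so I would state it as the naturality of the connecting homomorphism and invoke it explicitly. Everything else — finiteness of $d$, that $\cX=\add(X)$ lets us reduce to evaluation at $X$, and that a short exact sequence with surjective associated functor morphism is by definition a universal extension — is immediate from the setup and the definitions recalled earlier in the paper.
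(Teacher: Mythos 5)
Your proposal follows the same Bongartz-style construction that the paper sketches immediately before the lemma (basis of $\Ext^1_\cA(Y,X)$, direct sum of representing sequences, pullback along the diagonal), and you correctly fill in the verification the paper leaves implicit: that the class of the pulled-back sequence is $(\xi_1,\dots,\xi_d)\in\Ext^1_\cA(Y,X^d)\cong\Ext^1_\cA(Y,X)^d$, that the Yoneda connecting map evaluated at $X$ is surjective because the projections $X^d\to X$ hit each $\xi_i$, and that surjectivity propagates from $X$ to all of $\add(X)$ since both functors are additive. This matches the paper's intended argument; no gaps.
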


Combining \Cref{Lem:func fin tors give old equivalence} with \Cref{lemma: ext finite add gen tors enough} then gives us the following.

\begin{corol}\label{cor:equivalence when ext-finite}
    Let $\cA$ be an Ext-finite Krull-Schmidt abelian $\K$-category. If $\torspair$ is a torsion pair such that $\tors=\add(T)$ for some object $T\in \tors$, then we have an equivalence
    \begin{equation*}\label{eq:equivalence when ext-finite}
    \frac{\extort\tors}{\extort\tors\cap\tors}\simeq \torsfree.
    \end{equation*}
\end{corol}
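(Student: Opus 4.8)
\emph{Proof proposal.} The plan is to obtain the statement as an immediate combination of \Cref{lemma: ext finite add gen tors enough} and \Cref{Lem:func fin tors give old equivalence}. First I would note that the hypotheses of the corollary are tailored to feed into \Cref{lemma: ext finite add gen tors enough}: the category $\cA$ is Ext-finite by assumption, and $\tors=\add(T)$ has exactly the form $\add(X)$ required there. Applying that lemma with $\cX=\tors$ therefore shows that \emph{every} object $Y\in\cA$ admits a universal extension to $\tors$.

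Next I would invoke \Cref{Lem:func fin tors give old equivalence}, whose hypothesis asks only that every object of $\tors$ and of $\torsfree$ admits a universal extension to $\tors$. Since $\tors$ and $\torsfree$ are subcategories of $\cA$, this is a special case of what was just established; and $\cA$ is Krull--Schmidt by assumption, so all the running hypotheses of that corollary hold. Its conclusion is precisely the equivalence $\extort\tors/(\extort\tors\cap\tors)\simeq\torsfree$, i.e. \eqref{eq:equivalence to generalize in text}, which finishes the argument.

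I do not expect any genuine obstacle here: both cited results are already proved, and the only remaining task is the bookkeeping of checking that the hypotheses line up, which they do verbatim. The one point where a reader might want more detail is the Bongartz-type pullback sketched just before \Cref{lemma: ext finite add gen tors enough} — namely that the lower row of the displayed pullback diagram really induces an epimorphism $\Hom_\cA(X^d,-)|_{\cX}\twoheadrightarrow\Ext^1_\cA(Y,-)|_{\cX}$, so that one does obtain a universal extension in the sense of the definition — but since we take that lemma as given, nothing further needs to be verified.
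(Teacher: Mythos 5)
Your argument is correct and is exactly the paper's own proof: the paper explicitly introduces the corollary with the sentence ``Combining \Cref{Lem:func fin tors give old equivalence} with \Cref{lemma: ext finite add gen tors enough} then gives us the following,'' which is precisely the two-step composition you carry out. The hypothesis-checking you do is the right (and only) content of the verification.
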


\begin{remark}
    In \cite[Def.~0.2]{Buan2021tauTilting} the concept of \emph{left-weak cotorsion torsion triples} is introduced as a way to generalize the cotorsion torsion triples used in \cite{bauer2020cotorsion}. It is shown that these arise in connection with every functorially finite torsion pair for $\cA=\Modf\Lambda$, with $\Lambda$ a finite dimensional algebra. This can effortlessly and non-surprisingly be shown to also hold in a Krull--Schmidt abelian category with enough projectives. Let us briefly explain why. 
    
    In order for a pair $(\cC,\tors)$ of subcategories to be a left-weak cotorsion pair it need to satisfy the following
    \begin{enumerate}
        \item $\Ext_\cA^1(\cC,\tors)=0$, and
        \item for every $A\in \cA$ there exists
        \begin{enumerate}
            \item an exact sequence
            \[
                \begin{tikzcd}
                    T\rar[tail]&E\rar[two heads]{f}&A
                \end{tikzcd}
            \]
            with $T\in\tors$ and $f$ a right $\cC$-approximation of $A$, and
            \item an exact sequence
            \[
                \begin{tikzcd}
                    A\rar{g}&T'\rar[two heads]&X
                \end{tikzcd}
            \]
        such that $g$ is a left $\tors$-approximation of $A$ and $X\in \cC$. Note, $g$ is not necessarily a monomorphism.
        \end{enumerate}
    \end{enumerate}
    We have by construction that $(\extort\tors,\tors)$ satisfies (1), and (2)(a) follows from \Cref{Lemma: Covariantly finite gives universal} and \Cref{Lemma:Wakamatsu-type}. Finally, (2)(b) follows from the covariantly finiteness of $\tors$ and Wakamatsu's Lemma, see e.g. \cite[Lem.~2.1.13]{gobel2006approximations}.
\end{remark}

\section{Tame Hereditary Algebras}
Let us now investigate our equivalence for non-functorially finite torsion classes. In order for these to exist we need to move beyond representation-finite algebras. The simplest examples are tame hereditary algebras, hence we assume that $\Lambda$ is a path algebra of Euclidean type. Unless otherwise specified we assume our torsion pairs $\torspair$ to be non-functorially finite from now.

The AR-quivers of such algebras are well understood. They consist of a preprojective component $\preproj$, a preinjective component $\preinj$ and a family of regular components $\regular$. 
Further, the preprojective component is isomorphic to $(-\mathbb{N})Q^{\text{op}}$ and the preinjective component is isomorphic to $\mathbb{N}Q^{\text{op}}$. We refer to \cite{assem2006elements,daniel_elements_2007,Rin84} for more background on the structure of these components.  

In this section, we will focus on the regular component since by the following lemma both the preprojective and preinjective component behaves the same for all non-functorially finite torsion pairs. The result is well known, but for the convenience of the reader we write out the proof.
\begin{lemma}\label{lemma:non functorially finite only interesting in regular}
    Let $\Lambda$ be a tame hereditary algebra. If $\torspair$ is a non-functorially finite torsion pair, then $\preproj\subseteq  \torsfree$ and  $\preinj\subseteq\tors$
\end{lemma}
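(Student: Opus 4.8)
The plan is to exploit the well-known structural facts about module categories over tame hereditary algebras: the preprojective component $\preproj$ consists of modules with no nonzero maps coming in from $\regular \cup \preinj$, the preinjective component $\preinj$ consists of modules with no nonzero maps going out to $\preproj \cup \regular$, and crucially that $\preproj$ contains infinitely many indecomposables all of which are ``directed'' so that a torsion class is completely determined on $\preproj$ by a finite ``cut.'' First I would argue that if $\torsfree$ contains some preinjective indecomposable, or fails to contain some preprojective indecomposable, then $\tors$ must contain all but finitely many preprojectives (or all but finitely many of some infinite chain), and this is exactly the situation in which $\tors$ is generated by a module, hence functorially finite by \Cref{cor:equivalence when ext-finite}'s hypothesis being met or more directly by Smalø's theorem; contrapositively, non-functorial-finiteness forces $\preproj \subseteq \torsfree$ and $\preinj \subseteq \tors$.

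More concretely, the key steps in order: (1) Recall that any nonzero morphism from a regular or preinjective module to a preprojective module is zero, and dually; this is standard for tame hereditary algebras. (2) Show $\preproj \subseteq \torsfree$: suppose not, so some indecomposable preprojective $P$ lies in $\tors$ (an indecomposable not in $\torsfree$ has nonzero torsion part, and being a brick-like directed module in the preprojective component it is then itself in $\tors$, using that $\tors$ is closed under quotients and $\torsfree$ under submodules together with the torsion sequence). Since $\tors$ is closed under quotients and there are only finitely many indecomposable preprojectives $P'$ with $\Hom(P, P') = 0$ — in fact $P$ maps nonzero onto cofinitely many preprojectives, or one uses that the preprojectives beyond $P$ receive maps — one deduces all but finitely many preprojectives lie in $\tors$. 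Then using $\Hom(\regular \cup \preinj, \preproj) = 0$ one checks $\tors$ restricted to the preprojective component is $\gen$ of a finite set, and combined with the regular and preinjective parts (which are handled because $\tors$ closed under extensions and quotients from a finite generating set of regulars/preinjectives plus the cofinite preprojective tail) one gets $\tors = \gen(M)$ for a finitely generated $M$, so $\tors$ is functorially finite by Smalø's theorem, a contradiction. (3) Dually, show $\preinj \subseteq \tors$: if some indecomposable preinjective $Q$ lies in $\torsfree$, then since $\torsfree$ is closed under submodules and all but finitely many preinjectives map nonzero into $Q$ (equivalently $Q$ is an iterated extension reachable from cofinitely many preinjectives), all but finitely many preinjectives lie in $\torsfree$, and again together with $\Hom(\preinj, \preproj \cup \regular) = 0$ this makes $\torsfree = \sub(N)$ for finitely generated $N$, forcing functorial finiteness by Smalø's theorem — again a contradiction.

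I would present this as a single contrapositive argument: assuming $\preproj \not\subseteq \torsfree$ or $\preinj \not\subseteq \tors$, produce a module generating $\tors$ (resp. cogenerating $\torsfree$) and invoke Smalø's theorem to conclude $\torspair$ is functorially finite. The two halves are formally dual under the standard duality exchanging $\preproj$ and $\preinj$, so only one needs to be written in detail.

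The main obstacle I anticipate is step (2)/(3): carefully justifying that membership of a single indecomposable preprojective in $\tors$ forces cofinitely many preprojectives into $\tors$, and that this — together with the fact that a torsion class is closed under extensions and quotients — actually yields a \emph{single} finitely generated generator $M$ with $\tors = \gen(M)$. The subtlety is that the regular part of $\tors$ could a priori be complicated (infinitely many regular indecomposables spread across infinitely many tubes), so one must argue that once cofinitely many preprojectives are in $\tors$, any regular or preinjective module in $\tors$ is already a quotient of a preprojective in $\tors$, or else use that only finitely many tubes can contain torsion-free objects when the preprojective tail is in $\tors$ — this requires a genuine appeal to the geometry of the regular components and is the place where the ``well known'' proof needs the most care. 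Everything else is bookkeeping with torsion sequences and the Hom-vanishing between components.
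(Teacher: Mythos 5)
Your proposal takes a genuinely different route from the paper's and contains real gaps that the paper's argument neatly sidesteps.

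The paper argues \emph{via} $\torsfree$, not $\tors$: assume $M=\tau^{-t}P_i\in\preproj\setminus\torsfree$, reduce (using $\Hom(\preinj\cup\regular,\preproj)=0$) to $M\in\tors$, and then observe that for every $X\in\torsfree$ one has $\Hom(M,X)\cong\Hom(P_i,\tau^tX)=0$. Since $\Lambda/\langle e_i\rangle$ is representation-finite, only finitely many indecomposables $N$ satisfy $\Hom(P_i,N)=0$; as $\tau^t$ has finite fibres on indecomposables, this forces $\torsfree$ to be of finite type, hence functorially finite by \cite[Prop.~4.2]{auslander1980preprojective}, a contradiction. No attempt is made to exhibit a generator of $\tors$.

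Your plan instead wants to show $\tors=\gen(M)$ for a single finitely generated $M$ and invoke Smalø's theorem. This runs into two difficulties, both of which you partially flag but do not resolve. First, the step ``cofinitely many preprojectives lie in $\tors$'': nonvanishing of $\Hom(P,P')$ is \emph{not} the same as $P'$ being a quotient of $P^k$; irreducible maps in the preprojective component are not epimorphisms, so closure under quotients does not straightforwardly propagate $\tors$ along the component. Second, even granting the preprojective tail is in $\tors$, you must show the regular and preinjective parts of $\tors$ are all generated by this tail (or by it together with finitely many further modules), and as you note yourself, the regular part may be spread across infinitely many tubes in a complicated way. Neither point is established; your proposal admits that ``this is the place where the `well known' proof needs the most care,'' which is precisely the place where a proof is required. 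The paper's counting argument with $\tau^t$ and the quotient algebra $\Lambda/\langle e_i\rangle$ is the standard tool that makes this lemma a one-paragraph affair, and it is worth learning in place of trying to exhibit an explicit generator of $\tors$.
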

\begin{proof}
    We only show the inclusion of the preprojectives in the torsion free subcategory. Assume with a goal of contradiction that there is some $M=\tau^{-t}P_i\in\preproj\setminus\torsfree$. Then by the canonical torsion sequence and the fact that $\Hom_\Lambda(\preinj,\,\preproj)=\Hom_\Lambda(\regular,\,\preproj)=0$, we have that $\tfunc M\in \preproj$.
    Without loss of generality we therefore assume $M\in \tors$. 

    Any module without support at the simple $S_i$ is also a module over the corresponding quotient algebra $\Lambda/\langle e_i\rangle$, which is of finite representation type. Hence there are only finitely many indecomposable modules $N$ in $\Modf(\Lambda)$ such that $\Hom_\Lambda(P_i,\,N)=0$. Now, for any module $X$ in $\Modf(\Lambda)$, we have
    \[
    \Hom_\Lambda(M,\,X)=\Hom_\Lambda(\tau^{-t}P_i,\,X)\cong \Hom_\Lambda(P_i,\,\tau^t X).
    \]
    All these hom-sets vanish if $X$ is torsion-free. We can therefore conclude that $\torsfree$ may only contain finitely many indecomposables, which by \cite[Prop.~4.2]{auslander1980preprojective} means it is functorially finite. This contradiction finishes the proof.
\end{proof}

The regular components $\regular$ are hom-orthogonal tubes $\tube_{\lambda\in \mathbb{P}^1(\K)}$, each a serial Krull--Schmidt abelian category. Their AR-quivers are isomorphic to the translation quiver $\mathbb{Z}\qA_\infty/(\tau^r)$ where $r\geq 1$ is called the rank of $\tube_{\lambda}$. When $r=1$ the tube is called homogeneous. There are at most three non-homogeneous tubes in $\regular$. A consequence of the tubes being hom-orthogonal is that $\torspair$ is a non-functorially finite torsion pair of $\Modf(\Lambda)$ only if $(\tors\cap\tube,\,\torsfree\cap\tube)$ is a torsion pair for every tube $\tube$ and $\preproj\subseteq\torsfree$, $\preinj\subseteq\tors$.

We have collected a few elementary observations in the following lemma.

\begin{lemma}
    Let $\torspair$ be a non-functorially finite torsion pair in $\Modf(\Lambda)$.
    \begin{enumerate}
        \item For a homogeneous tube $\tube$, either $\tube\subseteq\tors$ or $\tube\subseteq\torsfree$,
    \end{enumerate} 
    Let $\cC$ be an AR-component.
    \begin{enumerate}
        \item[(2)] If $\cC\subseteq \torsfree$, then there are no non-trivial extensions from $\cC$ to $\tors$. Specifically, within $\cC$ we have that both sides of the equivalence \eqref{Theorem: Main result} are equal to $\cC$, with the universal extensions being trivial.
        \item[(3)] If $\cC\subseteq \tors$, then both sides of the equivalence \eqref{Theorem: Main result}  are equal to zero.
    \end{enumerate}
\end{lemma}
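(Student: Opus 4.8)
The plan is to treat the three statements separately, since each follows quickly from results already established, mainly \Cref{lemma:non functorially finite only interesting in regular} together with the hom-orthogonality of the tubes and the description of the equivalence in \Cref{Theorem: Main result}. I would begin with (1). Fix a homogeneous tube $\tube$ and suppose it is not contained in $\torsfree$; then some indecomposable $M\in\tube$ lies in $\tors$ (after replacing $M$ by $\tfunc M$, using that hom-orthogonality forces $\tfunc M\in\tube$). In a homogeneous tube every indecomposable is obtained from the quasi-simple by successive extensions, and every indecomposable receives a nonzero map from, and maps nonzero onto, the quasi-simple; so once one indecomposable of $\tube$ is in $\tors$, closure of $\tors$ under quotients pushes the quasi-simple out and closure under extensions pushes everything else in — I would spell out this "all or nothing" argument using that $\tube$ is a serial abelian category of rank one, so its indecomposables are totally ordered by quasi-length and each is both a sub and a quotient of the next. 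Dually if $M\in\torsfree$ then the whole tube is torsion-free. The one subtlety is ensuring we really may assume $M\in\tors\cap\tube$ rather than just $\tfunc M\neq 0$; this is exactly the hom-orthogonality argument already used in \Cref{lemma:non functorially finite only interesting in regular}, which I would cite.

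For (2), suppose the AR-component $\cC$ lies in $\torsfree$. I want to show $\Ext^1_\Lambda(\cC,\tors)=0$. Write any $N\in\tors$ via its quasi-socle filtration inside whatever component it sits in; more efficiently, it suffices to check $\Ext^1_\Lambda(M,N)=0$ for $M\in\cC$ indecomposable and $N$ indecomposable torsion. By the Auslander--Reiten formula $\Ext^1_\Lambda(M,N)\cong D\overline{\Hom}_\Lambda(N,\tau M)$, and since $\Lambda$ is hereditary this is $D\Hom_\Lambda(N,\tau M)$. Now $\tau M$ lies in the same component $\cC$ as $M$ (AR-translation preserves components), hence $\tau M\in\torsfree$, so $\Hom_\Lambda(N,\tau M)=\Hom_\Lambda(\tors,\torsfree)=0$. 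Therefore $\Ext^1_\Lambda(\cC,\tors)=0$, i.e. $\cC\subseteq\extort\tors$ and moreover $\cC\cap\tors=0$ (a nonzero object of $\cC$ is torsion-free). Then $\cC$ contributes nothing to the ideal $\mathcal{I}$ restricted to $\cC$, and every object of $\cC$ admits the trivial (split) universal extension $0\rightarrowtail C\twoheadrightarrow C$ to $\tors$, so $\cC\subseteq\univTorsfree$ and under the equivalence of \Cref{Theorem: Main result} the object $C$ corresponds to itself. Here one must be slightly careful about the phrase "both sides of the equivalence are equal to $\cC$": I would phrase it as saying the equivalence restricts to the identity on the full subcategory $\add\cC$, since $\mathfrak{F}$ and $\cotorsfunc$ both act as the identity on objects of $\cC$ and $\mathcal{I}$ vanishes on $\cC$.

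Statement (3) is the easiest: if $\cC\subseteq\tors$ then every object of $\cC$ is torsion, so $\cC\cap\univTorsfree=0$ on the right, and on the left every object of $\cC$, being in $\tors$, is killed in the quotient $\extort\tors/\mathcal{I}$ (any identity morphism of an object in $\tors\cap\extort\tors$ factors through $\tors$, so becomes zero in $\mathcal{I}$), hence the restriction of $\extort\tors/\mathcal{I}$ to $\add\cC$ is the zero category. I would just record these two one-line observations. Overall I expect the only real work to be the "all or nothing" argument in (1) and the bookkeeping around what "both sides are equal to $\cC$" means in (2); the rest is immediate from the AR-formula, heredity, and the definition of a torsion pair, all of which are available. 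The main obstacle, such as it is, is making the rank-one serial structure argument in (1) precise without a lengthy digression, so I would keep it to the observation that the indecomposables of a rank-one tube form a chain under quasi-length with consecutive sub/quotient relations, from which closure under quotients and extensions does the rest.
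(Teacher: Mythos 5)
The paper states this lemma without any proof, simply labelling it ``a few elementary observations,'' so there is no proof in the text to compare against; your blind proof supplies one, and it is essentially correct. A few remarks.

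In part (1), the reduction to ``some indecomposable of $\tube$ lies in $\tors$'' needs slightly more than the hom-orthogonality of tubes. Hom-orthogonality rules out summands of $\tfunc M$ in other tubes, and $\Hom_\Lambda(\preinj,\regular)=0$ rules out preinjective summands, but to exclude \emph{preprojective} summands of $\tfunc M$ you need that $\preproj\subseteq\torsfree$, which is precisely the content of \Cref{lemma:non functorially finite only interesting in regular}. Since you cite that lemma, the argument goes through, but the phrase ``hom-orthogonality argument'' slightly undersells what is invoked. Once you have an indecomposable $M\in\tube\cap\tors$, your chain argument via the quasi-socle (the quasi-simple is a quotient of $M$, hence torsion; then every indecomposable is an iterated self-extension of the quasi-simple, hence torsion) is the standard and correct one for a rank-one tube; the wording ``pushes the quasi-simple out'' presumably means ``shows the quasi-simple is also torsion.'' The dual case ($M\in\torsfree$) is symmetric.

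Part (2) via the Auslander--Reiten formula $\Ext^1_\Lambda(M,N)\cong D\Hom_\Lambda(N,\tau M)$ for $\Lambda$ hereditary, together with the fact that $\tau$ preserves AR-components (and $\tau M=0$ kills the case $M$ projective), is clean and correct; the conclusion $\cC\subseteq\extort\tors$, $\cC\cap\tors=0$, $\mathcal{I}|_{\cC}=0$, and the split sequence being the (minimal) universal extension to $\tors$ gives exactly the intended reading of ``both sides equal $\cC$.'' Part (3) is immediate as you say: $\univTorsfree\subseteq\torsfree$ kills the right-hand side on $\cC$, and any object of $\cC\cap\extort\tors$ lies in $\tors$ so its identity is in $\mathcal{I}$, killing the left-hand side. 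Overall the proposal fills the gap the paper leaves, with only the small imprecision noted above.
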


In light of this, we may restrict ourselves further to considering torsion pairs $\torspair$ such that $\tors\cap\tube\neq 0\neq \torsfree\cap\tube$ for at least one non-homogeneous tube $\tube$. Further since there are no non-trivial maps between tubes and from the preinjective component to a tube, any universal extension from $F\in\tube$ to $\tors$ is also a universal extension to $\tors\cap \tube$. Hence, we may look for universal extension only locally within tubes.

\subsection{Torsion pairs in tubes}\label{sec:torsion pairs in tubes}
Torsion pairs in tubes have been classified in \cite{baur2014torsion} through a bijection with cotilting modules. In order to utilize their result we need to establish a bit of theory and terminology; for the details we refer back to \cite{baur2014torsion}. In the following we assume that $\tube$ is a tube of rank $r> 1$. From now, unless specified, a pair $\torspair$ is a torsion pair with respect to the tube $\tube$.

We denote the simples in $\tube$ by $\ind{i}{i}$ for $i \in \mathbb{Z}$, such that $\tau \ind{i}{i}=\ind{i-1}{i-1}$ and $\ind{i+r}{i+r}=\ind{i}{i}$. The indecomposables are uniquely given through their composition series since $\tube$ is serial. An indecomposable of length $l$ and socle $\ind{i}{i}$ may therefore unambiguously be denoted $\ind{i}{i+l-1}$. With this convention we have the almost split sequences
\[
\begin{tikzcd}
    \ind{i-1}{i+l-2}\rar[tail]& \ind{i-1}{i+l-1}\oplus \ind{i}{i+l-2}\rar[two heads]& \ind{i}{i+l-1}
\end{tikzcd}
\]
where $\ind{i}{i+l-2}=0$ if $l<2$. The AR-quiver takes the shape

\[\begin{tikzcd}[column sep=.8,row sep=10,cells={nodes={scale=.7}}]
	{} &&&& \ddots && \udots &&&& {} \\
	&&& \ddots && {\ind{0}{r}} && \udots \\
	&&&& {\ind{0}{r-1}} && {\ind{1}{r}} \\
	&&& \udots && {\ind{1}{r-1}} && \ddots \\
	{ \ind{-1}{1}} && {\ind{0}{2}} && \udots && \ddots && \ddots && {\ind{r-1}{r+1}} \\
	& {\ind{0}{1}} && {\ind{1}{2}} &&&& \ddots && {\ind{r-1}{r}} \\
	{\ind{0}{0}} && {\ind{1}{1}} && {\ind{2}{2}} && \cdots && {\ind{r-1}{r-1}} && {\ind{r}{r}}
	\arrow[from=2-6, to=3-7]
	\arrow[from=3-5, to=2-6]
	\arrow[from=3-5, to=4-6]
	\arrow[dashed, no head, from=3-7, to=3-5]
	\arrow[from=4-6, to=3-7]
	\arrow[dotted, no head, from=5-1, to=1-1]
	\arrow[from=5-1, to=6-2]
	\arrow[dashed, no head, from=5-3, to=5-1]
	\arrow[from=5-3, to=6-4]
	\arrow[dotted, no head, from=5-11, to=1-11]
	\arrow[from=6-2, to=5-3]
	\arrow[from=6-2, to=7-3]
	\arrow[dashed, no head, from=6-4, to=6-2]
	\arrow[from=6-4, to=7-5]
	\arrow[from=6-10, to=5-11]
	\arrow[from=6-10, to=7-11]
	\arrow[dotted, no head, from=7-1, to=5-1]
	\arrow[from=7-1, to=6-2]
	\arrow[from=7-3, to=6-4]
	\arrow[dashed, no head, from=7-3, to=7-1]
	\arrow[dashed, no head, from=7-5, to=7-3]
	\arrow[from=7-9, to=6-10]
	\arrow[dotted, no head, from=7-11, to=5-11]
	\arrow[dashed, no head, from=7-11, to=7-9]
\end{tikzcd}\]
with the left and right hand sides identified. The tube $\tube$ as defined here does not contain any projectives nor any injectives, hence when we consider Ext-projectives (Ext-injectives) of a subcategory in $\tube$ and the context is clear, we drop the prefix.

To each simple object $\ind{i}{i}$ we define the following two additive subcategories:
\begin{enumerate}
    \item The \emph{ray} at $i$, denoted $\ray{i}$, is additively generated by the indecomposables having $\ind{i}{i}$ as socle. Equivalently,
    \[
    \ray{i}=\add\{\, \ind{i}{i},\  \ind{i}{i+1},\  \cdots,\ \ind{i}{i+l-1},\  \cdots\,\}.
    \]
    \item The \emph{coray} at $i$, denoted $\coray{i}$, is additively generated by the indecomposables having $\ind{i}{i}$ as top. Equivalently,
    \[
    \coray{i}=\add\{\, \cdots,\  \ind{i-l+1}{i},\ \cdots,\  \ind{i-1}{i},\  \ind{i}{i}  \,\}.
    \]
\end{enumerate}

For $1\leq t\leq r$ and $i\in\mathbb{Z}$ we define an additive category $\wing{i}{j}$ called a wing, additively generated by the subfactors of $\ind{i}{j}$. We could also describe it as $\ind{a}{b}$ being in $\wing{i}{j}$ if $i\leq a\leq b\leq j$. 
To make the definitions clearer, observe the following tube of rank $5$ with the coray at $1$ and the wing $\wing{2}{4}$ drawn in. 
\[
\begin{tikzpicture}
    
    \foreach \x in {0,1,...,5}{
    \node (S\x) at (\x,0)[]{$\circ$};
    \node (r3-\x) at (\x,1)[]{$\circ$};
    \node (r5-\x) at (\x,2)[]{$\circ$};
    \node (r7-\x) at (\x,3)[]{$\vdots$};
    }
    \foreach \x in {1,2,3,4,5}{
    \node (r2-\x) at ({\x-.5},.5) []{$\circ$};
    \node (r4-\x) at ({\x-.5},1.5) []{$\circ$};
    \node (r6-\x) at ({\x-.5},2.5) []{$\circ$};
    }
    \foreach \x in {0,1,...,4}{
    \node at (S\x) [below,yshift=-3pt]{\tiny$\x$};
    }
    \node at (S5) [below,yshift=-3pt]{\tiny$5$};
    \draw[dashed] (S0)--(r3-0);
    \draw[dashed] (r3-0)--(r5-0);
    \draw[dashed] (r5-0)--(r7-0);
    \draw[dashed] (S5)--(r3-5);
    \draw[dashed] (r3-5)--(r5-5);
    \draw[dashed] (r5-5)--(r7-5);
    \foreach \x [evaluate=\x as \xx using int(\x+1)] in {0,1,2,3,4}{
        \draw[-latex] ($(S\x)+(.1,.1)$)--($(r2-\xx)-(.1,.1)$);
        \draw[-latex] ($(r3-\x)+(.1,.1)$)--($(r4-\xx)-(.1,.1)$);
        \draw[-latex] ($(r3-\x)+(.1,-.1)$)--($(r2-\xx)+(-.1,.1)$);
        \draw[-latex] ($(r5-\x)+(.1,-.1)$)--($(r4-\xx)+(-.1,.1)$);
        \draw[-latex] ($(r5-\x)+(.1,.1)$)--($(r6-\xx)-(.1,.1)$);

        \draw[-latex] ($(r2-\xx)+(.1,-.1)$)--($(S\xx)+(-.1,.1)$);
        \draw[-latex] ($(r2-\xx)+(.1,.1)$)--($(r3-\xx)-(.1,.1)$);
        \draw[-latex] ($(r4-\xx)+(.1,-.1)$)--($(r3-\xx)+(-.1,.1)$);
        \draw[-latex] ($(r4-\xx)+(.1,.1)$)--($(r5-\xx)-(.1,.1)$);
        \draw[-latex] ($(r6-\xx)+(.1,-.1)$)--($(r5-\xx)+(-.1,.1)$);
    }
    \begin{scope}[on background layer]
        \fill[pattern=crosshatch dots,pattern color=gray!40,rounded corners=3pt] ($(S2)-(.3,.1)$)--(r3-3.north)--($(S4)+(.3,-.1)$)--cycle;
        \fill[pattern=north east lines, pattern color=gray!40,rounded corners=3pt] (r3-0.west)--(S1.south)--(S1.east)--(r3-0.north);
        \fill[pattern=north east lines, pattern color=gray!40] (r3-5.south)--(r6-4.west)--(r6-4.north)--(r3-5.east)--cycle;
    \end{scope}
    \draw[thick,rounded corners=3pt] ($(S2)-(.3,.1)$)--(r3-3.north)--($(S4)+(.3,-.1)$)--cycle;
    \draw[thick,rounded corners=3pt] (r3-0.west)--(S1.south)--(S1.east)--(r3-0.north);
    \draw[thick] (r3-5.south)--(r6-4.west);
    \draw[thick] (r3-5.east)--(r6-4.north);
    
\end{tikzpicture}
\]

We are now splitting the rest of our investigation in two cases since we by Corollary~5.5 in \cite{baur2014torsion} know that either 
\begin{enumerate}
    \item $\tors$ is of finite type and $\torsfree$ of infinite type, or
    \item $\tors$ is of infinite type and $\torsfree$ is of finite type.
\end{enumerate}

\subsubsection{Case 1: $\tors$ finite type}\label{subsec: tors finite type}
When $\tors$ is of finite type, we know by \Cref{cor:equivalence when ext-finite} that we have 
\[
\frac{\extort\tors}{\extort\tors\cap\tors}\simeq \torsfree
\]
since the tubes are Ext-finite categories. Let us illustrate how this looks explicitly in the tubes.

When $\tors$ is of finite type, we know by \cite[Prop.~5.12]{baur2014torsion} that torsion pairs are given by a collection of rays and torsion pairs in wings below the rays. More explicitly,
we can find indices $0\leq i_0<i_1<\ldots<i_{k-1}$ with the convention $i_k=i_0+r$ giving that 
\[\torsfree=\add{\left(\bigcup_{\alpha=0}^{k-1}\left(\ray{i_\alpha}\cup \torsfree_\alpha\right)\right)}
\quad\text{ and }\quad
\tors=\add\left(\bigcup_{\alpha=0}^{k-1} \tors_\alpha\right)\]
where $(\tors_\alpha,\,\torsfree_\alpha)$ are torsion pairs in $\wing{i_\alpha}{i_{\alpha+1}-1}$ such that the Ext-projectives of $\wing{i_{\alpha}}{i_{\alpha+1}-1}$ lie in $\torsfree_\alpha$ and the remaining Ext-injectives lie in $\tors_\alpha$. 

\begin{example}\label{ex:torsion-finiteWings}
Consider a tube $\tube$ of rank $5$, with a torsion pair $\torspair$ such that the rays $\ray{0}$ and $\ray{4}$ lie in $\torsfree$. In the wing $\wing{0}{3}$ we then have $\ind{1}{3},\ind{2}{3}$ and $\ind{3}{3}$ in $\tors$. $\ind{1}{1},\ind{1}{2}$ and $\ind{2}{2}$ can lie in either $\tors$ or $\torsfree$. The remaining indecomposables lie in neither. In the illustration below we have drawn in the wing $\wing{0}{3}$, marked the torsion-free objects with a filled circle and the torsion objects with a square. The crossed objects lie in neither and the non-filled circles may lie in either.
    \[
\begin{tikzpicture}
    \node (S0) at (0,0){$\bullet$};
    \node (S1) at (1,0) {$\circ$};
    \node (S2) at (2,0) {$\circ$};
    \node (S3) at (3,0) {$\square$};
    \node (S4) at (4,0) {$\bullet$};
    \node (S5) at (5,0) {$\bullet$};
    \node (r2-1) at (.5,.5) {$\bullet$};
    \node (r2-2) at (1.5,.5) {$\circ$};
    \node (r2-3) at (2.5,.5) {$\square$};
    \node (r2-4) at (3.5,.5) {$\times$};
    \node (r2-5) at (4.5,.5) {$\bullet$};
    \node (r3-0) at (0,1) {$\bullet$};
    \node (r3-1) at (1,1) {$\bullet$};
    \node (r3-2) at (2,1) {$\square$};
    \node (r3-3) at (3,1) {$\times$};
    \node(r3-4) at (4,1) {$\times$};
    \node (r3-5) at (5,1){$\bullet$};
    \node (r4-1) at (.5,1.5) {$\bullet$};
    \node (r4-2) at (1.5,1.5) {$\bullet$};
    \node (r4-3) at (2.5,1.5) {$\times$};
    \node (r4-4) at (3.5,1.5) {$\times$};
    \node (r4-5) at (4.5,1.5) {$\times$};
    \node (r5-0) at (0,2) {$\times$};
    \node (r5-1) at (1,2) {$\bullet$};
    \node (r5-2) at (2,2) {$\bullet$};
    \node (r5-3) at (3,2) {$\times$};
    \node (r5-4) at (4,2) {$\times$};
    \node (r5-5) at (5,2) {$\times$};
    \node (r6-1) at (.5,2.5) {$\times$};
    \node (r6-2) at (1.5,2.5) {$\bullet$};
    \node (r6-3) at (2.5,2.5) {$\bullet$};
    \node (r6-4) at (3.5,2.5) {$\times$};
    \node (r6-5) at (4.5,2.5) {$\times$};

    \foreach \x in {0,1,...,5}{
    
    \node (r7-\x) at (\x,3)[]{$\vdots$};
    }
    \foreach \x in {0,1,...,4}{
    \node at (S\x) [below,yshift=-3pt]{\tiny$\x$};
    }
    \node at (S5) [below,yshift=-3pt]{\tiny$5$};
    \draw[dashed] (S0)--(r3-0);
    \draw[dashed] (r3-0)--(r5-0);
    \draw[dashed] (r5-0)--(r7-0);
    \draw[dashed] (S5)--(r3-5);
    \draw[dashed] (r3-5)--(r5-5);
    \draw[dashed] (r5-5)--(r7-5);
    \foreach \x [evaluate=\x as \xx using int(\x+1)] in {0,1,2,3,4}{
        \draw[-latex,opacity=.3] ($(S\x)+(.1,.1)$)--($(r2-\xx)-(.1,.1)$);
        \draw[-latex,opacity=.3] ($(r3-\x)+(.1,.1)$)--($(r4-\xx)-(.1,.1)$);
        \draw[-latex,opacity=.3] ($(r3-\x)+(.1,-.1)$)--($(r2-\xx)+(-.1,.1)$);
        \draw[-latex,opacity=.3] ($(r5-\x)+(.1,-.1)$)--($(r4-\xx)+(-.1,.1)$);
        \draw[-latex,opacity=.3] ($(r5-\x)+(.1,.1)$)--($(r6-\xx)-(.1,.1)$);

        \draw[-latex,opacity=.3] ($(r2-\xx)+(.1,-.1)$)--($(S\xx)+(-.1,.1)$);
        \draw[-latex,opacity=.3] ($(r2-\xx)+(.1,.1)$)--($(r3-\xx)-(.1,.1)$);
        \draw[-latex,opacity=.3] ($(r4-\xx)+(.1,-.1)$)--($(r3-\xx)+(-.1,.1)$);
        \draw[-latex,opacity=.3] ($(r4-\xx)+(.1,.1)$)--($(r5-\xx)-(.1,.1)$);
        \draw[-latex,opacity=.3] ($(r6-\xx)+(.1,-.1)$)--($(r5-\xx)+(-.1,.1)$);
    }
    \draw[thick,rounded corners=3pt] ($(r4-2.center)+(0,.4)$)--($(S0.center)-(.5,.1)$)--($(S3.center)+(.5,-.1)$)--cycle;
    \draw[thick,rounded corners=3pt] ($(S5.center)+(.2,.6)$)--($(S5.center)-(.5,.1)$)--+(.7,0);
    
    \end{tikzpicture}\qedhere
    \]
\end{example}

The wings $\wing{i_\alpha}{i_{\alpha+1}-1}$ can be seen as embeddings of the linearly oriented path algebra of type $\qA_{i_{\alpha+1}-i_\alpha}$ and thus within them the equivalence is reduced to the representation-finite case. Let us therefore rather focus on what happens outside of them. For an object $\ind{i_{\beta}}{i_{\beta}+l-1}\in\ray{i_{\beta}}$ with $\beta=\alpha+1$ we have
\[
0\to \ind{i_{\alpha}+1}{i_{\beta}-1}\to\ind{i_{\alpha}+1}{i_{\beta}+l-1}\to \ind{i_{\beta}}{i_{\beta}+l-1}\to 0
\]
as universal extension to $\tors$. Hence, under the functor $\cotorsfunc\colon \univTorsfree\to \extort\tors/\mathcal{I}$ of \Cref{Lemma:functorial wakamatsu} the ray $\ray{i_\beta}$ is sent to 
\[ \cotorsfunc\ray{i_{\beta}}=\add\{\ind{i_{\alpha}+1}{i_{\beta}}\to\ind{i_{\alpha}+1}{i_{\beta}+1}\to\ind{i_{\alpha}+1}{i_{\beta}+2}\to\cdots\}\subseteq \tau^{-}\ray{i_\alpha+1}. \]
with equality when $i_{\beta}=i_\alpha+1$.
\begin{example}[\Cref{ex:torsion-finiteWings} continued]
    Consider the torsion pair of \Cref{ex:torsion-finiteWings} where we also have chosen $\ind{1}{1}$, $\ind{1}{2}$ and $\ind{2}{2}$ to be torsion-free. Then we see that the indecomposables of $\extort\tors\setminus \tors$ are given by the nodes in the shaded region of the left hand side illustration. In the right hand side illustration the shaded nodes are those representing indecomposables in $\univTorsfree$.
    \[\begin{tikzpicture}
    \node (S0) at (0,0){$\bullet$};
    \node (S1) at (1,0) {$\bullet$};
    \node (S2) at (2,0) {$\bullet$};
    \node (S3) at (3,0) {$\square$};
    \node (S4) at (4,0) {$\bullet$};
    \node (S5) at (5,0) {$\bullet$};
    \node (r2-1) at (.5,.5) {$\bullet$};
    \node (r2-2) at (1.5,.5) {$\bullet$};
    \node (r2-3) at (2.5,.5) {$\square$};
    \node (r2-4) at (3.5,.5) {$\times$};
    \node (r2-5) at (4.5,.5) {$\bullet$};
    \node (r3-0) at (0,1) {$\bullet$};
    \node (r3-1) at (1,1) {$\bullet$};
    \node (r3-2) at (2,1) {$\square$};
    \node (r3-3) at (3,1) {$\times$};
    \node(r3-4) at (4,1) {$\times$};
    \node (r3-5) at (5,1){$\bullet$};
    \node (r4-1) at (.5,1.5) {$\bullet$};
    \node (r4-2) at (1.5,1.5) {$\bullet$};
    \node (r4-3) at (2.5,1.5) {$\times$};
    \node (r4-4) at (3.5,1.5) {$\times$};
    \node (r4-5) at (4.5,1.5) {$\times$};
    \node (r5-0) at (0,2) {$\times$};
    \node (r5-1) at (1,2) {$\bullet$};
    \node (r5-2) at (2,2) {$\bullet$};
    \node (r5-3) at (3,2) {$\times$};
    \node (r5-4) at (4,2) {$\times$};
    \node (r5-5) at (5,2) {$\times$};
    \node (r6-1) at (.5,2.5) {$\times$};
    \node (r6-2) at (1.5,2.5) {$\bullet$};
    \node (r6-3) at (2.5,2.5) {$\bullet$};
    \node (r6-4) at (3.5,2.5) {$\times$};
    \node (r6-5) at (4.5,2.5) {$\times$};

    \foreach \x in {0,1,...,5}{
    
    \node (r7-\x) at (\x,3)[]{$\vdots$};
    }
    \foreach \x in {0,1,...,4}{
    \node at (S\x) [below,yshift=-3pt]{\tiny$\x$};
    }
    \node at (S5) [below,yshift=-3pt]{\tiny$5$};
    \draw[dashed] (S0)--(r3-0);
    \draw[dashed] (r3-0)--(r5-0);
    \draw[dashed] (r5-0)--(r7-0);
    \draw[dashed] (S5)--(r3-5);
    \draw[dashed] (r3-5)--(r5-5);
    \draw[dashed] (r5-5)--(r7-5);
    \foreach \x [evaluate=\x as \xx using int(\x+1)] in {0,1,2,3,4}{
        \draw[-latex,opacity=.3] ($(S\x)+(.1,.1)$)--($(r2-\xx)-(.1,.1)$);
        \draw[-latex,opacity=.3] ($(r3-\x)+(.1,.1)$)--($(r4-\xx)-(.1,.1)$);
        \draw[-latex,opacity=.3] ($(r3-\x)+(.1,-.1)$)--($(r2-\xx)+(-.1,.1)$);
        \draw[-latex,opacity=.3] ($(r5-\x)+(.1,-.1)$)--($(r4-\xx)+(-.1,.1)$);
        \draw[-latex,opacity=.3] ($(r5-\x)+(.1,.1)$)--($(r6-\xx)-(.1,.1)$);

        \draw[-latex,opacity=.3] ($(r2-\xx)+(.1,-.1)$)--($(S\xx)+(-.1,.1)$);
        \draw[-latex,opacity=.3] ($(r2-\xx)+(.1,.1)$)--($(r3-\xx)-(.1,.1)$);
        \draw[-latex,opacity=.3] ($(r4-\xx)+(.1,-.1)$)--($(r3-\xx)+(-.1,.1)$);
        \draw[-latex,opacity=.3] ($(r4-\xx)+(.1,.1)$)--($(r5-\xx)-(.1,.1)$);
        \draw[-latex,opacity=.3] ($(r6-\xx)+(.1,-.1)$)--($(r5-\xx)+(-.1,.1)$);
    }
    \draw[thick,rounded corners=3pt] ($(r6-3.center)+(-.3,.15)$)--($(S0.center)-(.5,.1)$)--($(S2.center)+(.5,-.1)$)--($(r3-1.center)+(.4,0)$)--($(r5-2.center)-(0,.4)$)--($(r4-3.center)-(0,.4)$)--($(r6-4.center)+(.5,.15)$);
    \draw[thick,rounded corners=3pt] ($(S5.center)+(.2,.6)$)--($(S5.center)-(.5,.1)$)--+(.7,0);
    
    \begin{scope}[on background layer]
        \fill[pattern=crosshatch dots,pattern color=gray!40,rounded corners=3pt] ($(r6-3.center)+(-.3,.15)$)--($(S0.center)-(.5,.1)$)--($(S2.center)+(.5,-.1)$)--($(r3-1.center)+(.4,0)$)--($(r5-2.center)-(0,.4)$)--($(r4-3.center)-(0,.4)$)--($(r6-4.center)+(.5,.15)$)--cycle;
        \fill[pattern=crosshatch dots,pattern color=gray!40,rounded corners=3pt] ($(S5.center)+(.2,.6)$)--($(S5.center)-(.5,.1)$)--+(.7,0)--cycle;
        
    \end{scope}

    \end{tikzpicture}
    \quad
    \begin{tikzpicture}
    \node (S0) at (0,0){$\bullet$};
    \node (S1) at (1,0) {$\bullet$};
    \node (S2) at (2,0) {$\bullet$};
    \node (S3) at (3,0) {$\square$};
    \node (S4) at (4,0) {$\bullet$};
    \node (S5) at (5,0) {$\bullet$};
    \node (r2-1) at (.5,.5) {$\bullet$};
    \node (r2-2) at (1.5,.5) {$\bullet$};
    \node (r2-3) at (2.5,.5) {$\square$};
    \node (r2-4) at (3.5,.5) {$\times$};
    \node (r2-5) at (4.5,.5) {$\bullet$};
    \node (r3-0) at (0,1) {$\bullet$};
    \node (r3-1) at (1,1) {$\bullet$};
    \node (r3-2) at (2,1) {$\square$};
    \node (r3-3) at (3,1) {$\times$};
    \node(r3-4) at (4,1) {$\times$};
    \node (r3-5) at (5,1){$\bullet$};
    \node (r4-1) at (.5,1.5) {$\bullet$};
    \node (r4-2) at (1.5,1.5) {$\bullet$};
    \node (r4-3) at (2.5,1.5) {$\times$};
    \node (r4-4) at (3.5,1.5) {$\times$};
    \node (r4-5) at (4.5,1.5) {$\times$};
    \node (r5-0) at (0,2) {$\times$};
    \node (r5-1) at (1,2) {$\bullet$};
    \node (r5-2) at (2,2) {$\bullet$};
    \node (r5-3) at (3,2) {$\times$};
    \node (r5-4) at (4,2) {$\times$};
    \node (r5-5) at (5,2) {$\times$};
    \node (r6-1) at (.5,2.5) {$\times$};
    \node (r6-2) at (1.5,2.5) {$\bullet$};
    \node (r6-3) at (2.5,2.5) {$\bullet$};
    \node (r6-4) at (3.5,2.5) {$\times$};
    \node (r6-5) at (4.5,2.5) {$\times$};

    \foreach \x in {0,1,...,5}{
    
    \node (r7-\x) at (\x,3)[]{$\vdots$};
    }
    \foreach \x in {0,1,...,4}{
    \node at (S\x) [below,yshift=-3pt]{\tiny$\x$};
    }
    \node at (S5) [below,yshift=-3pt]{\tiny$5$};
    \draw[dashed] (S0)--(r3-0);
    \draw[dashed] (r3-0)--(r5-0);
    \draw[dashed] (r5-0)--(r7-0);
    \draw[dashed] (S5)--(r3-5);
    \draw[dashed] (r3-5)--(r5-5);
    \draw[dashed] (r5-5)--(r7-5);
    \foreach \x [evaluate=\x as \xx using int(\x+1)] in {0,1,2,3,4}{
        \draw[-latex,opacity=.3] ($(S\x)+(.1,.1)$)--($(r2-\xx)-(.1,.1)$);
        \draw[-latex,opacity=.3] ($(r3-\x)+(.1,.1)$)--($(r4-\xx)-(.1,.1)$);
        \draw[-latex,opacity=.3] ($(r3-\x)+(.1,-.1)$)--($(r2-\xx)+(-.1,.1)$);
        \draw[-latex,opacity=.3] ($(r5-\x)+(.1,-.1)$)--($(r4-\xx)+(-.1,.1)$);
        \draw[-latex,opacity=.3] ($(r5-\x)+(.1,.1)$)--($(r6-\xx)-(.1,.1)$);

        \draw[-latex,opacity=.3] ($(r2-\xx)+(.1,-.1)$)--($(S\xx)+(-.1,.1)$);
        \draw[-latex,opacity=.3] ($(r2-\xx)+(.1,.1)$)--($(r3-\xx)-(.1,.1)$);
        \draw[-latex,opacity=.3] ($(r4-\xx)+(.1,-.1)$)--($(r3-\xx)+(-.1,.1)$);
        \draw[-latex,opacity=.3] ($(r4-\xx)+(.1,.1)$)--($(r5-\xx)-(.1,.1)$);
        \draw[-latex,opacity=.3] ($(r6-\xx)+(.1,-.1)$)--($(r5-\xx)+(-.1,.1)$);
    }
    \draw[thick,rounded corners=3pt] {($(r6-2.center)+(-.3,.15)$)--($(r3-0.center)-(.2,-.25)$)}{($(S0.center)-(.2,.1)$)--($(S2.center)+(.5,-.1)$)--($(r3-1.center)+(.4,0)$)--($(r6-3.center)+(.5,.15)$)};
    \draw[thick,rounded corners=3pt] ($(r3-5.center)+(.2,.6)$)--($(S4.center)-(.5,.1)$)--($(S5.center)-(0,.1)$)--+(.2,0);
    \begin{scope}[on background layer]
        \fill[pattern=north west lines,pattern color=gray!40,rounded corners=3pt] 
        ($(r6-2.center)+(-.3,.15)$)--($(r3-0.center)-(.2,-.25)$)--
        ($(S0.center)-(.2,.1)$)--($(S2.center)+(.5,-.1)$)--($(r3-1.center)+(.4,0)$)--($(r6-3.center)+(.5,.15)$)
        --cycle;
        \fill[pattern=north west lines,pattern color=gray!40,rounded corners=3pt] ($(r3-5.center)+(.2,.6)$)--($(S4.center)-(.5,.1)$)--($(S5.center)-(0,.1)$)--+(.2,0);
    \end{scope}
    
    \end{tikzpicture}\qedhere
    \]
    
\end{example}

\subsubsection{Case 2: $\tors$ infinite type}\label{subsec: tors infinite type}

When $\tors$ is infinite, the situation changes quite drastically. In this setting our torsion pair is built from a collection of corays and torsion pairs in the wings below these. More precisely, by \cite[Prop.~5.12]{baur2014torsion} we can find indices $0\leq i_0<i_1<\cdots<i_{k-1}$ with the convention $i_k=i_0+r$ such that
\[
\tors=\add\left(\bigcup_{\alpha=0}^{k-1}(\coray{i_\alpha}\cup \tors_\alpha)\right)
\quad\text{ and }\quad
\torsfree=\add\left(\bigcup_{\alpha=0}^{k-1}\torsfree_\alpha\right)
\]
where $(\tors_\alpha,\,\torsfree_\alpha)$ are torsion pairs in $\wing{i_\alpha+1}{i_{\alpha+1}}$ with the Ext-injectives of $\wing{i_\alpha+1}{i_{\alpha+1}}$ in $\tors_\alpha$ and the remaining Ext-projectives in $\torsfree_\alpha$.

\begin{example}
Consider again the tube $\tube$ of rank $5$, but now with a torsion pair $\torspair$ such that the coray $\coray{0}$ lie in $\tors$. Then in $\wing{1}{5}$ we have $\ind{1}{1},\ind{1}{2}$, $\ind{1}{3}$ and $\ind{1}{4}$ in $\torsfree$. The indecomposables $\ind{2}{2},\ind{2}{3}$, $\ind{2}{4}$, $\ind{3}{3}$, $\ind{3}{4}$ and $\ind{4}{4}$ can lie in $\tors$ or $\torsfree$. The remaining indecomposables lie in neither. We follow the same convention as above in the illustration; torsion objects are given by squares, torsion-free objects by filled circles, non-declared objects by non-filled circles and the excluded objects by crosses.
    \[
\begin{tikzpicture}
    \node (S0) at (0,0){$\square$};
    \node (S1) at (1,0) {$\bullet$};
    \node (S2) at (2,0) {$\circ$};
    \node (S3) at (3,0) {$\circ$};
    \node (S4) at (4,0) {$\circ$};
    \node (S5) at (5,0) {$\square$};
    
    \node (r2-1) at (.5,.5) {$\times$};
    \node (r2-2) at (1.5,.5) {$\bullet$};
    \node (r2-3) at (2.5,.5) {$\circ$};
    \node (r2-4) at (3.5,.5) {$\circ$};
    \node (r2-5) at (4.5,.5) {$\square$};
    
    \node (r3-0) at (0,1) {$\times$};
    \node (r3-1) at (1,1) {$\times$};
    \node (r3-2) at (2,1) {$\bullet$};
    \node (r3-3) at (3,1) {$\circ$};
    \node(r3-4) at (4,1) {$\square$};
    \node (r3-5) at (5,1){$\times$};
    
    \node (r4-1) at (.5,1.5) {$\times$};
    \node (r4-2) at (1.5,1.5) {$\times$};
    \node (r4-3) at (2.5,1.5) {$\bullet$};
    \node (r4-4) at (3.5,1.5) {$\square$};
    \node (r4-5) at (4.5,1.5) {$\times$};
    
    \node (r5-0) at (0,2) {$\times$};
    \node (r5-1) at (1,2) {$\times$};
    \node (r5-2) at (2,2) {$\times$};
    \node (r5-3) at (3,2) {$\square$};
    \node (r5-4) at (4,2) {$\times$};
    \node (r5-5) at (5,2) {$\times$};
    
    \node (r6-1) at (.5,2.5) {$\times$};
    \node (r6-2) at (1.5,2.5) {$\times$};
    \node (r6-3) at (2.5,2.5) {$\square$};
    \node (r6-4) at (3.5,2.5) {$\times$};
    \node (r6-5) at (4.5,2.5) {$\times$};

    \foreach \x in {0,1,...,5}{
    
    \node (r7-\x) at (\x,3)[]{$\vdots$};
    }
    \foreach \x in {0,1,...,4}{
    \node at (S\x) [below,yshift=-3pt]{\tiny$\x$};
    }
    \node at (S5) [below,yshift=-3pt]{\tiny$5$};
    \draw[dashed] (S0)--(r3-0);
    \draw[dashed] (r3-0)--(r5-0);
    \draw[dashed] (r5-0)--(r7-0);
    \draw[dashed] (S5)--(r3-5);
    \draw[dashed] (r3-5)--(r5-5);
    \draw[dashed] (r5-5)--(r7-5);
    \foreach \x [evaluate=\x as \xx using int(\x+1)] in {0,1,2,3,4}{
        \draw[-latex,opacity=.3] ($(S\x)+(.1,.1)$)--($(r2-\xx)-(.1,.1)$);
        \draw[-latex,opacity=.3] ($(r3-\x)+(.1,.1)$)--($(r4-\xx)-(.1,.1)$);
        \draw[-latex,opacity=.3] ($(r3-\x)+(.1,-.1)$)--($(r2-\xx)+(-.1,.1)$);
        \draw[-latex,opacity=.3] ($(r5-\x)+(.1,-.1)$)--($(r4-\xx)+(-.1,.1)$);
        \draw[-latex,opacity=.3] ($(r5-\x)+(.1,.1)$)--($(r6-\xx)-(.1,.1)$);

        \draw[-latex,opacity=.3] ($(r2-\xx)+(.1,-.1)$)--($(S\xx)+(-.1,.1)$);
        \draw[-latex,opacity=.3] ($(r2-\xx)+(.1,.1)$)--($(r3-\xx)-(.1,.1)$);
        \draw[-latex,opacity=.3] ($(r4-\xx)+(.1,-.1)$)--($(r3-\xx)+(-.1,.1)$);
        \draw[-latex,opacity=.3] ($(r4-\xx)+(.1,.1)$)--($(r5-\xx)-(.1,.1)$);
        \draw[-latex,opacity=.3] ($(r6-\xx)+(.1,-.1)$)--($(r5-\xx)+(-.1,.1)$);
    }
    \draw[thick,rounded corners=3pt] ($(r5-3.center)+(0,.4)$)--($(S1.center)-(.5,.1)$)--($(S5.center)+(.5,-.1)$)--cycle;

    \draw[thick,rounded corners=3pt] ($(S0.center)+(-.2,.6)$)--($(S0.center)+(.5,-.1)$)-- +(-.7,0);
    \end{tikzpicture}
    \]
    Within the wing $\wing{2}{4}$, we observe that all the indecomposables have only the trivial extension to the coray $\coray{0}$. Hence, we can without obstruction study the equivalence of \Cref{thm: Main Theorem} as if we were in the module category of the path algebra over the linearly oriented dynkin quiver $\qA_3$. Which means we have $\torsfree\cap \wing{2}{4}=\univTorsfree\cap\wing{2}{4}\simeq (\extort\tors\cap\wing{2}{4})/\mathcal{I}$. 

    Observe now that the Ext-projectives of $\wing{1}{5}$ all have extensions to all indecomposables of $\coray{0}$. In particular they are not ext-orthogonal to $\tors$. Further, since the indecomposables in $\coray{0}$ have arbitrarily large lengths we realise through the push-out description of universal extensions in \Cref{remark:universal extension as pushout} that the Ext-projectives also do not admit any universal extensions to $\tors$. In summary, we have $\univTorsfree=\torsfree\cap\wing{2}{4}\subsetneq \torsfree$.  
    
    From the discussion above, we observe that $\extort\tors\subseteq \wing{2}{5}$. Now, using that $\wing{2}{5}$ is an exact embedding of $\K\qA_4$ and every module over that algebra has a universal extension to every subcategory, we conclude as in \Cref{Lem:func fin tors give old equivalence} that the quotient $\extort\tors/\mathcal{I}$ is equivalent to $\extort\tors/[\tors\cap\extort\tors]$.

    In the tube $\tube$ we conclude that the equivalence of \Cref{thm: Main Theorem} is given by
    \[
    \torsfree\cap\wing{2}{4}\simeq \extort\tors/[\tors\cap\extort\tors].
    \]
    We give an illustration below where we have made a choice in $\wing{2}{4}$. In the left picture we have shaded the indecomposable in $\extort\tors\setminus\tors$ and in the right picture we have shaded the indecomposable in $\univTorsfree$.

    \[
    \begin{tikzpicture}
    \node (S0) at (0,0){$\square$};
    \node (S1) at (1,0) {$\bullet$};
    \node (S2) at (2,0) {$\bullet$};
    \node (S3) at (3,0) {$\square$};
    \node (S4) at (4,0) {$\bullet$};
    \node (S5) at (5,0) {$\square$};
    
    \node (r2-1) at (.5,.5) {$\times$};
    \node (r2-2) at (1.5,.5) {$\bullet$};
    \node (r2-3) at (2.5,.5) {$\bullet$};
    \node (r2-4) at (3.5,.5) {$\times$};
    \node (r2-5) at (4.5,.5) {$\square$};
    
    \node (r3-0) at (0,1) {$\times$};
    \node (r3-1) at (1,1) {$\times$};
    \node (r3-2) at (2,1) {$\bullet$};
    \node (r3-3) at (3,1) {$\bullet$};
    \node(r3-4) at (4,1) {$\square$};
    \node (r3-5) at (5,1){$\times$};
    
    \node (r4-1) at (.5,1.5) {$\times$};
    \node (r4-2) at (1.5,1.5) {$\times$};
    \node (r4-3) at (2.5,1.5) {$\bullet$};
    \node (r4-4) at (3.5,1.5) {$\square$};
    \node (r4-5) at (4.5,1.5) {$\times$};
    
    \node (r5-0) at (0,2) {$\times$};
    \node (r5-1) at (1,2) {$\times$};
    \node (r5-2) at (2,2) {$\times$};
    \node (r5-3) at (3,2) {$\square$};
    \node (r5-4) at (4,2) {$\times$};
    \node (r5-5) at (5,2) {$\times$};
    
    \node (r6-1) at (.5,2.5) {$\times$};
    \node (r6-2) at (1.5,2.5) {$\times$};
    \node (r6-3) at (2.5,2.5) {$\square$};
    \node (r6-4) at (3.5,2.5) {$\times$};
    \node (r6-5) at (4.5,2.5) {$\times$};

    \foreach \x in {0,1,...,5}{
    
    \node (r7-\x) at (\x,3)[]{$\vdots$};
    }
    \foreach \x in {0,1,...,4}{
    \node at (S\x) [below,yshift=-5pt]{\tiny$\x$};
    }
    \node at (S5) [below,yshift=-5pt]{\tiny$5$};
    \draw[dashed] (S0)--(r3-0);
    \draw[dashed] (r3-0)--(r5-0);
    \draw[dashed] (r5-0)--(r7-0);
    \draw[dashed] (S5)--(r3-5);
    \draw[dashed] (r3-5)--(r5-5);
    \draw[dashed] (r5-5)--(r7-5);
    \foreach \x [evaluate=\x as \xx using int(\x+1)] in {0,1,2,3,4}{
        \draw[-latex,opacity=.3] ($(S\x)+(.1,.1)$)--($(r2-\xx)-(.1,.1)$);
        \draw[-latex,opacity=.3] ($(r3-\x)+(.1,.1)$)--($(r4-\xx)-(.1,.1)$);
        \draw[-latex,opacity=.3] ($(r3-\x)+(.1,-.1)$)--($(r2-\xx)+(-.1,.1)$);
        \draw[-latex,opacity=.3] ($(r5-\x)+(.1,-.1)$)--($(r4-\xx)+(-.1,.1)$);
        \draw[-latex,opacity=.3] ($(r5-\x)+(.1,.1)$)--($(r6-\xx)-(.1,.1)$);

        \draw[-latex,opacity=.3] ($(r2-\xx)+(.1,-.1)$)--($(S\xx)+(-.1,.1)$);
        \draw[-latex,opacity=.3] ($(r2-\xx)+(.1,.1)$)--($(r3-\xx)-(.1,.1)$);
        \draw[-latex,opacity=.3] ($(r4-\xx)+(.1,-.1)$)--($(r3-\xx)+(-.1,.1)$);
        \draw[-latex,opacity=.3] ($(r4-\xx)+(.1,.1)$)--($(r5-\xx)-(.1,.1)$);
        \draw[-latex,opacity=.3] ($(r6-\xx)+(.1,-.1)$)--($(r5-\xx)+(-.1,.1)$);
    }
    \begin{scope}[on background layer]
        \fill[pattern=crosshatch dots,pattern color=gray!40,rounded corners=3pt] ($(r3-3.center)+(0,.3)$)--($(S2.center)-(.3,0)$)--($(S2.center)+(0,-.3)$)--($(r3-3.center)-(0,0.3)$)--($(r2-4.center)-(0,.3)$)--($(r2-4.center)+(0.3,0)$)--cycle;
        
    \end{scope}
    \draw[thick,rounded corners=3pt] ($(r3-3.center)+(0,.3)$)--($(S2.center)-(.3,0)$)--($(S2.center)+(0,-.3)$)--($(r3-3.center)-(0,0.3)$)--($(r2-4.center)-(0,.3)$)--($(r2-4.center)+(0.3,0)$)--cycle;

    \end{tikzpicture}
    \quad
    \begin{tikzpicture}
    \node (S0) at (0,0){$\square$};
    \node (S1) at (1,0) {$\bullet$};
    \node (S2) at (2,0) {$\bullet$};
    \node (S3) at (3,0) {$\square$};
    \node (S4) at (4,0) {$\bullet$};
    \node (S5) at (5,0) {$\square$};
    
    \node (r2-1) at (.5,.5) {$\times$};
    \node (r2-2) at (1.5,.5) {$\bullet$};
    \node (r2-3) at (2.5,.5) {$\bullet$};
    \node (r2-4) at (3.5,.5) {$\times$};
    \node (r2-5) at (4.5,.5) {$\square$};
    
    \node (r3-0) at (0,1) {$\times$};
    \node (r3-1) at (1,1) {$\times$};
    \node (r3-2) at (2,1) {$\bullet$};
    \node (r3-3) at (3,1) {$\bullet$};
    \node(r3-4) at (4,1) {$\square$};
    \node (r3-5) at (5,1){$\times$};
    
    \node (r4-1) at (.5,1.5) {$\times$};
    \node (r4-2) at (1.5,1.5) {$\times$};
    \node (r4-3) at (2.5,1.5) {$\bullet$};
    \node (r4-4) at (3.5,1.5) {$\square$};
    \node (r4-5) at (4.5,1.5) {$\times$};
    
    \node (r5-0) at (0,2) {$\times$};
    \node (r5-1) at (1,2) {$\times$};
    \node (r5-2) at (2,2) {$\times$};
    \node (r5-3) at (3,2) {$\square$};
    \node (r5-4) at (4,2) {$\times$};
    \node (r5-5) at (5,2) {$\times$};
    
    \node (r6-1) at (.5,2.5) {$\times$};
    \node (r6-2) at (1.5,2.5) {$\times$};
    \node (r6-3) at (2.5,2.5) {$\square$};
    \node (r6-4) at (3.5,2.5) {$\times$};
    \node (r6-5) at (4.5,2.5) {$\times$};

    \foreach \x in {0,1,...,5}{
    
    \node (r7-\x) at (\x,3)[]{$\vdots$};
    }
    \foreach \x in {0,1,...,4}{
    \node at (S\x) [below,yshift=-5pt]{\tiny$\x$};
    }
    \node at (S5) [below,yshift=-5pt]{\tiny$5$};
    \draw[dashed] (S0)--(r3-0);
    \draw[dashed] (r3-0)--(r5-0);
    \draw[dashed] (r5-0)--(r7-0);
    \draw[dashed] (S5)--(r3-5);
    \draw[dashed] (r3-5)--(r5-5);
    \draw[dashed] (r5-5)--(r7-5);
    \foreach \x [evaluate=\x as \xx using int(\x+1)] in {0,1,2,3,4}{
        \draw[-latex,opacity=.3] ($(S\x)+(.1,.1)$)--($(r2-\xx)-(.1,.1)$);
        \draw[-latex,opacity=.3] ($(r3-\x)+(.1,.1)$)--($(r4-\xx)-(.1,.1)$);
        \draw[-latex,opacity=.3] ($(r3-\x)+(.1,-.1)$)--($(r2-\xx)+(-.1,.1)$);
        \draw[-latex,opacity=.3] ($(r5-\x)+(.1,-.1)$)--($(r4-\xx)+(-.1,.1)$);
        \draw[-latex,opacity=.3] ($(r5-\x)+(.1,.1)$)--($(r6-\xx)-(.1,.1)$);

        \draw[-latex,opacity=.3] ($(r2-\xx)+(.1,-.1)$)--($(S\xx)+(-.1,.1)$);
        \draw[-latex,opacity=.3] ($(r2-\xx)+(.1,.1)$)--($(r3-\xx)-(.1,.1)$);
        \draw[-latex,opacity=.3] ($(r4-\xx)+(.1,-.1)$)--($(r3-\xx)+(-.1,.1)$);
        \draw[-latex,opacity=.3] ($(r4-\xx)+(.1,.1)$)--($(r5-\xx)-(.1,.1)$);
        \draw[-latex,opacity=.3] ($(r6-\xx)+(.1,-.1)$)--($(r5-\xx)+(-.1,.1)$);
    }
    \begin{scope}[on background layer]
        \fill[pattern=north west lines,pattern color=gray!40,rounded corners=3pt] ($(S4.center)-(0.2,.2)$)--($(S4.center)+(0.2,-.2)$)--($(S4.center)+(0.2,.2)$)--
        ($(S4.center)+(-0.2,.2)$)--cycle;
        \fill[pattern=north west lines,pattern color=gray!40,rounded corners=3pt] ($(r3-3.center)+(0,.3)$)--($(S2.center)-(.3,0)$)--($(S2.center)+(0,-.3)$)--($(r3-3.center)+(0.3,0)$)--cycle;
        
    \end{scope}
    
    \draw[thick,rounded corners=3pt] ($(r3-3.center)+(0,.3)$)--($(S2.center)-(.3,0)$)--($(S2.center)+(0,-.3)$)--($(r3-3.center)+(0.3,0)$)--cycle;
    \draw[thick,rounded corners=3pt] ($(S4.center)-(0.2,.2)$)--($(S4.center)+(0.2,-.2)$)--($(S4.center)+(0.2,.2)$)--
        ($(S4.center)+(-0.2,.2)$)--cycle;
    \end{tikzpicture}
    \]
    Note that the torsion pair in $\wing{2}{4}$ is exactly that of \Cref{ex:torsion pair A3} under an embedding of $\K\qA_3$ into $\wing{2}{4}$. However, as the Ext-projectives in $\wing{1}{4}$ are torsion-free, but not in $\univTorsfree$, we observe that the equivalence of \eqref{eq:equivalence to generalize in text} does not hold.
\end{example}

The behaviour exhibited in the example above is indicative of the general picture. That is, we have 
\[
\univTorsfree=\torsfree\cap\add\left(\bigcup_{\alpha=0}^{k-1}\wing{i_{\alpha}+2}{i_{\alpha+1}-1}\right),
\]
\[
\extort\tors=\add\left(\bigcup_{\alpha=0}^{k-1}\extort\tors_\alpha\right).
\]
and
\[
\mathcal{I}=[\tors\cap\extort\tors]
\]
In other words, when describing the equivalence of \Cref{thm: Main Theorem} we can restrict ourselves to a collection of path algebras over linearly oriented Dynkin quivers of $\qA_n$ type.

\begin{remark}
    For a tame hereditary algebra we note that for any torsion pair $\torspair$ all but finitely many torsion-free indecomposables admit a universal extension to $\tors$. This follows from the above discussion and the fact that there are no non-trivial extension from a preprojective module to a non-preprojective module.
\end{remark}

\bibliographystyle{amsalpha}
\bibliography{bibliotek}
\end{document}